\DeclareSymbolFont{cyrletters}{OT2}{wncyr}{m}{n}
\DeclareMathSymbol{\Sha}{\mathalpha}{cyrletters}{"58}
\newcommand{\defi}[1]{\textsf{#1}} % for defined terms
\newcommand{\Aff}{\mathbb{A}}
\newcommand{\C}{\mathbb{C}}
\newcommand{\F}{\mathbb{F}}
\newcommand{\fhat}{\widehat{f}}
\newcommand{\PP}{\mathbb{P}}
\newcommand{\Q}{\mathbb{Q}}
\newcommand{\R}{\mathbb{R}}
\newcommand{\Z}{\mathbb{Z}}
\newcommand{\calE}{\mathcal{E}}
\newcommand{\calW}{\mathcal{W}}
\newcommand{\DD}{\mathscr{D}}
\newcommand{\Pointed}{\mathscr{P}}
\newcommand{\UU}{\mathscr{U}}
\newcommand{\an}{{\operatorname{an}}}
\newcommand{\anm}{\textup{an}-}
\newcommand{\injects}{\hookrightarrow}
\newcommand{\intersect}{\cap} % binary intersection
\newcommand{\union}{\cup} % binary union
\newcommand{\Union}{\bigcup} % union of a collection
\newtheorem{theorem}{Theorem}[section]
\newtheorem{lemma}[theorem]{Lemma}
\newtheorem{corollary}[theorem]{Corollary}
\newtheorem{proposition}[theorem]{Proposition}
\theoremstyle{definition}
\theoremstyle{remark}
\newtheorem{remark}[theorem]{Remark}
\newtheorem{warning}[theorem]{Warning}
\begin{document}

\title{Berkovich spaces embed in Euclidean spaces}
\subjclass[2010]{Primary 14G22; Secondary 54F50}
\keywords{Berkovich space, analytification, dendrite, local dendrite, Euclidean embedding}

\author{Ehud Hrushovski}
\thanks{E.H. was supported by 
the European Research Council under the European Union's Seventh Framework Programme (FP7/2007-2013) / ERC Grant agreement no. 291111/ MODAG} 
 \address{Department of Mathematics, The Hebrew University, Jerusalem, Israel} \email{ehud@math.huji.ac.il}
\author{Fran\c{c}ois Loeser}
\thanks{F.L. was supported by the European Research Council under the European Union's Seventh Framework Programme (FP7/2007-2013) / ERC Grant agreement no. 246903/NMNAG}
\address{Institut de Math\'ematiques de Jussieu, 
UMR 7586 du CNRS,
Universit\'e Pierre et Marie Curie, 
Paris, France}
\email{Francois.Loeser@upmc.fr} 
\urladdr{http://www.dma.ens.fr/~loeser/}
\author{Bjorn Poonen}
\thanks{B.P. was partially supported by the Guggenheim Foundation and National Science Foundation grant DMS-1069236.}
\address{Department of Mathematics, Massachusetts Institute of Technology, Cambridge, MA 02139-4307, USA}
\email{poonen@math.mit.edu}
\urladdr{\url{http://math.mit.edu/~poonen/}}

\date{October 23, 2012}

\begin{abstract}
Let $K$ be a field that is complete with respect to 
a nonarchimedean absolute value
such that $K$ has a countable dense subset.
We prove that the Berkovich analytification $V^{\an}$ of any 
$d$-dimensional quasi-projective scheme $V$ over $K$ embeds in $\R^{2d+1}$.
If, moreover, the value group of $K$ is dense in $\R_{>0}$
and $V$ is a curve,
then we describe the homeomorphism type of $V^{\an}$
by using the theory of local dendrites.
\end{abstract}

\maketitle

%****************************************************************************
\section{Introduction}\label{S:introduction}

In this article, \defi{valued field} will mean
a field $K$ equipped with a nonarchimedean absolute value $|\;|$
(or equivalently with a valuation 
taking values in an additive subgroup of $\R$).
Let $K$ be a complete valued field.
Let $V$ be a quasi-projective $K$-scheme.
The associated Berkovich space $V^{\an}$ \cite{Berkovich1990}*{\S3.4} 
is a topological space 
that serves as a nonarchimedean analogue of the complex analytic space
associated to a complex variety.
(Actually, $V^{\an}$ carries more structure, but it is only the
underlying topological space that concerns us here.)
Although the set $V(K)$ in its natural topology is totally disconnected,
$V^{\an}$ is arcwise connected if and only if 
$V$ is connected~\cite{Berkovich1990}*{Proposition~3.4.8(iii)}.
Also, $V^{\an}$ is locally contractible: 
see \cites{Berkovich1999,Berkovich2004} for the smooth case,
and \cite{Hrushovski-Loeser-preprint}*{Theorem~13.4.1} for the general case.

Our goal is to study the topology of $V^{\an}$
under a mild countability hypothesis on $K$ with its absolute value topology.
For instance, we prove the following:

\begin{theorem}
\label{T:main}
Let $K$ be a complete valued field having a countable dense subset.
Let $V$ be a quasi-projective $K$-scheme of dimension $d$.
Then $V^{\an}$ is homeomorphic to a subspace of $\R^{2d+1}$.
\end{theorem}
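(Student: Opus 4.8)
The plan is to deduce the theorem from one classical result in dimension theory, the \emph{Menger--N\"obeling embedding theorem}: every separable metrizable space of topological (covering) dimension at most $n$ is homeomorphic to a subspace of $\R^{2n+1}$. Thus it suffices to prove that $V^{\an}$ is (i) separable, (ii) metrizable, and (iii) of topological dimension at most $d$; applying Menger--N\"obeling with $n=d$ then yields the embedding into $\R^{2d+1}$. Since $V$ is quasi-projective, I would fix a locally closed immersion $V \injects \PP^N_K$, which analytifies to a topological embedding of $V^{\an}$ onto a subspace of the compact Hausdorff space $(\PP^N_K)^{\an}$.

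For (i) and (ii), I would first show that $(\PP^N_K)^{\an}$ is second countable. Let $K_0 \subseteq K$ be a countable dense subset. The Berkovich topology admits a subbasis consisting of the sets $\{x : \alpha < |f(x)| < \beta\}$ as $f$ ranges over (homogeneous) polynomials and $\alpha,\beta$ over $\R_{\ge 0}$; because a point of the analytification is a multiplicative seminorm and, on the compact space $(\PP^N_K)^{\an}$, seminorm evaluation depends continuously and uniformly on the coefficients of $f$, one checks that restricting to $f$ with coefficients in $K_0$ and to $\alpha,\beta \in \Q_{\ge 0}$ already yields a countable subbasis generating the same topology. Hence $(\PP^N_K)^{\an}$ is second countable; being also compact Hausdorff it is regular, so by the Urysohn metrization theorem it is metrizable, and a second countable metrizable space is separable. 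These properties pass to the subspace $V^{\an}$, which is therefore again second countable (hence separable) and metrizable. This settles (i) and (ii).

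The main obstacle is (iii), the bound $\dim V^{\an} \le d$; note that the topological dimension of the ambient $(\PP^N_K)^{\an}$ is in general far larger than $d$, so one must exploit the geometry of $V^{\an}$ itself rather than that of the ambient space. Here I would invoke the structure theory of \cite{Hrushovski-Loeser-preprint}: via the canonical deformation retractions, $V^{\an}$ is homeomorphic to the inverse limit of an inverse system of finite simplicial complexes (skeleta), each of dimension at most $d = \dim V$. Granting this, the bound follows from the standard fact that an inverse limit of compact metrizable spaces of dimension $\le d$ again has dimension $\le d$; for non-proper $V$ one first reduces to this compact situation, since $V^{\an}$ is locally compact and second countable, hence $\sigma$-compact, and the countable sum theorem for covering dimension reduces the bound to compact pieces. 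The crux is therefore the verification, through the skeleton machinery, that the approximating polyhedra have dimension bounded by the \emph{algebraic} dimension $d$ and that the retractions exhibit $V^{\an}$ as their inverse limit. An alternative, more hands-on route would use Noether normalization to produce finite surjections onto affine space, observe that the induced finite (closed, finite-to-one) maps of Berkovich spaces do not raise covering dimension, and reduce to showing $\dim (\Aff^d_K)^{\an} \le d$ by induction on $d$ via the projections $(\Aff^d_K)^{\an} \to (\Aff^{d-1}_K)^{\an}$, whose fibers are Berkovich affine lines of dimension $1$.

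With (i), (ii), and (iii) established, the Menger--N\"obeling theorem applies to $V^{\an}$ with $n = d$ and produces the desired embedding $V^{\an} \injects \R^{2d+1}$, completing the proof.
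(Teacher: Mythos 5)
Your main route is correct, but it is genuinely different from the paper's, so a comparison is worthwhile. The paper also reduces to projective $V$ and also relies on \cite{Hrushovski-Loeser-preprint}*{Theorem~13.2.4} for the crucial fact that $V^{\an}$ is a filtered limit of finite simplicial complexes of dimension at most $d$; but from there the two arguments diverge. The paper's concern is that the HL index set may be uncountable, so it first replaces $K$ by a countable dense subfield (this is what its sections on fiber coproducts of valued fields and on Berkovich spaces over noncomplete fields are for), obtaining $V^{\an}$ as a limit of a \emph{sequence} of complexes --- whence metrizability is automatic --- and then proves, by an explicit inductive construction with simplicial approximation and general-position perturbations, a self-contained embedding theorem for such sequential limits, never invoking covering dimension. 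You instead keep the possibly uncountable filtered system, extract the bound $\dim V^{\an} \le d$ from the standard fact that inverse limits of compacta do not raise dimension, and supply separability/metrizability by a direct second-countability argument on $(\PP^N_K)^{\an}$ using coefficients in the countable dense subfield and rational radii; this substitutes entirely for the paper's field-theoretic reduction, and your nonarchimedean approximation step is sound (on a unit polydisc chart, the ultrametric inequality even gives $|f_0(x)|=|f(x)|$ once the coefficients are close enough, so the $K_0$-subbasis generates the same topology). What each approach buys: yours is shorter if one accepts Menger--N\"obeling, the inverse-limit dimension theorem, and the subspace theorem as black boxes (and the paper's Remark~\ref{R:history} explicitly acknowledges that the classical results could be used this way); the paper's is elementary and self-contained on the topological side, and its amalgamation/base-change results have independent interest. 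One caution: your ``alternative, more hands-on route'' via Noether normalization is not sound as stated --- the projection $(\Aff^d_K)^{\an} \to (\Aff^{d-1}_K)^{\an}$ is not a closed map, which Hurewicz-type dimension-lowering theorems require, and finite-to-one closed surjections can in general \emph{raise} covering dimension, so the reduction steps there would need real repair; since you offer it only as an alternative, it does not affect the validity of your main argument.
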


\begin{remark}
The hypothesis that $K$ has a countable dense subset 
is necessary as well as sufficient.
Namely, $K$ embeds in $(\Aff^1_K)^{\an}$, 
so if the latter embeds in a separable metric space such as $\R^n$,
then $K$ must have a countable dense subset.
\end{remark}

\begin{remark}
The hypothesis is satisfied when $K$ is the completion
of an algebraic closure of a completion of a global field $k$, 
i.e., when $K$ is $\C_p \colonequals \widehat{\overline{\Q}}_p$ 
or its characteristic~$p$ analogue $\widehat{\overline{\F_p((t))}}$,
because the algebraic closure of $k$ in $K$ is countable and dense.
It follows that the hypothesis is satisfied also 
for any complete subfield of these two fields.
\end{remark}

Recall that a valued field is called \defi{spherically complete} 
if every descending sequence of balls has nonempty intersection.
Say that $K$ has \defi{dense value group}
if $|\;| \colon K^\times \to \R_{>0}$ has dense image,
or equivalently if the value group is not isomorphic to $\{0\}$ or $\Z$.

\begin{remark}
The separability %u
hypothesis fails 
for any spherically complete field $K$ with dense value group.
Proof: 
Let $(t_i)$ be a sequence of elements of $K$ 
such that the sequence $|t_i|$ is strictly decreasing with positive limit.
For each sequence $\epsilon = (\epsilon_i)$ 
with $\epsilon_i \in \{0,1\}$,
define 
\[
	U_\epsilon \colonequals \left\{ x \in K: \left| x- \textstyle\sum_{i=1}^n \epsilon_i t_i \right| < |t_n| \textup{ for all $n$} \right\}.
\]
The $U_\epsilon$ are uncountably many disjoint open subsets of $K$,
and each is nonempty by definition of spherically complete.
\end{remark}

Let us sketch the proof of Theorem~\ref{T:main}.
We may assume that $V$ is projective.
The key is a result that presents $V^{\an}$ as a filtered limit of 
finite simplicial complexes.
Variants of this limit description 
have appeared in several places in the literature
(see the end of \cite{Payne2009}*{Section~1} for a summary);
for convenience, we use \cite{Hrushovski-Loeser-preprint}*{Theorem~13.2.4},
a version that does not assume that $K$ is algebraically closed
(and that proves more than we need, namely that the maps in the inverse
limit can be taken to be strong deformation retractions).
Our hypothesis on $K$ is used to show that the index set for the limit 
has a countable cofinal subset.
To complete the proof, we use a well-known result from topology,
Proposition~\ref{P:embedding inverse limit},
that an inverse limit of a sequence of finite simplicial complexes
of dimension at most $d$ can be embedded in $\R^{2d+1}$.

Our article is organized as follows.
Sections \ref{S:embeddings} and~\ref{S:limits}
give a quick proof of Proposition~\ref{P:embedding inverse limit}.
Sections \ref{S:fiber coproducts} and~\ref{S:noncomplete fields}
prove results that are needed to replace $K$ by a countable subfield,
in order to obtain a countable index set for the inverse limit.
Section~\ref{S:embeddings of Berkovich spaces} combines all of the above
to prove Theorem~\ref{T:main}.
The final sections of the paper study the topology of Berkovich curves:
after reviewing and developing the theory of 
dendrites and local dendrites 
in Sections \ref{S:dendrites} and~\ref{S:local dendrites},
respectively,
we show in Section~\ref{S:Berkovich curves}
how to obtain the homeomorphism type of any Berkovich curve
over $K$ as above, under the additional hypothesis that the 
value group is dense in $\R_{>0}$.
For example, as a special case of Corollary~\ref{C:P^1},
we show that $(\PP^1_{\C_p})^{\an}$ is homeomorphic
to a topological space first constructed in 1923, 
the Wa{\.z}ewski universal dendrite~\cite{Wazewski1923}

%****************************************************************************
\section{Approximating maps of finite simplicial complexes by embeddings}
\label{S:embeddings}

If $X$ is a topological space,
a map $f \colon X \to \R^n$ is called an \defi{embedding} 
if $f$ is a homeomorphism onto its image.
For compact $X$, it is equivalent to require that $f$ be 
a continuous injection.
When we speak of a finite simplicial complex, we always mean its
geometric realization, a compact subset of some $\R^n$.
A set of points in $\R^n$ is said to be in \defi{general position}
if for each $m \le n-1$, no $m+2$ of the points lie in an 
$m$-dimensional affine subspace.

\begin{lemma}
\label{L:simplicial approximation}
Let $X$ be a finite simplicial complex of dimension at most $d$.
Let $\epsilon \in \R_{>0}$.
For any continuous map $f \colon X \to \R^{2d+1}$,
there is an embedding $g \colon X \to \R^{2d+1}$
such that $|g(x)-f(x)| \le \epsilon$ for all $x \in X$.
\end{lemma}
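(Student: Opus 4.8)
The plan is to reduce to a map that is affine on each simplex and then to move the images of the vertices into general position.

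First I would replace $f$ by a nearby piecewise-linear map. Since $X$ is compact, $f$ is uniformly continuous, so after passing to a sufficiently fine subdivision of $X$ (which changes neither $|X|$ nor its dimension) I can define $h \colon X \to \R^{2d+1}$ to be the unique map agreeing with $f$ at every vertex and affine on each simplex. If $\sigma$ is a simplex with vertices $v_0,\dots,v_k$ and $x = \sum \alpha_i v_i$, then $|h(x)-f(x)| = \bigl|\sum \alpha_i\,(f(v_i)-f(x))\bigr| \le \max_i |f(v_i)-f(x)|$, which is at most $\epsilon/2$ once the simplices have small enough diameter. Thus it suffices to approximate the affine-on-simplices map $h$ by an embedding to within $\epsilon/2$.

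Next I would perturb the images of the finitely many vertices into general position. An affine-on-simplices map is determined by the images of the vertices, so let $v_1,\dots,v_N$ be the vertices and consider the configuration $(h(v_1),\dots,h(v_N)) \in (\R^{2d+1})^N$. For each $m \le 2d$ and each subset of $m+2$ of the vertices, the condition that the corresponding points lie in a common $m$-dimensional affine subspace is the vanishing of certain determinants, hence a proper Zariski-closed condition on the configuration; the union of these finitely many bad sets is nowhere dense. Hence for any $\delta > 0$ I can choose a configuration $(g(v_1),\dots,g(v_N))$ in general position with $|g(v_i)-h(v_i)| \le \delta$ for all $i$. Extending affinely over each simplex gives a map $g \colon X \to \R^{2d+1}$, and since barycentric coordinates are bounded by $1$, taking $\delta$ small enough forces $|g(x)-h(x)| \le \epsilon/2$ and therefore $|g(x)-f(x)| \le \epsilon$.

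The hard part is checking that general position forces $g$ to be injective, and this is the only place the bound $2d+1$ enters. Suppose $g(x)=g(y)$ with $x \in \sigma$ and $y \in \tau$ for simplices $\sigma,\tau$ of $X$. Writing $x$ and $y$ in barycentric coordinates expresses $g(x)-g(y)=0$ as a relation $\sum_{v \in \sigma} \alpha_v\, g(v) - \sum_{w \in \tau} \beta_w\, g(w) = 0$ whose coefficients sum to $0$, i.e.\ an affine dependence among the images of the vertices in $\sigma \cup \tau$. Because $\dim X \le d$, the set $\sigma \cup \tau$ has at most $2(d+1) = 2d+2$ vertices; since general position (taken with $m = 2d$) says no $2d+2$ of the vertex images lie in a $2d$-dimensional affine subspace of $\R^{2d+1}$, these images are affinely independent. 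An affine dependence among affinely independent points is trivial, so $\alpha_v = 0$ for $v$ in $\sigma$ alone, $\beta_w = 0$ for $w$ in $\tau$ alone, and $\alpha_u = \beta_u$ for $u \in \sigma \cap \tau$; hence $x = y$ and both lie in $\sigma \cap \tau$. Thus $g$ is a continuous injection of a compact space, and therefore an embedding. The dimension $2d+1$ is exactly what guarantees that $2d+2$ generic points of $\R^{2d+1}$ are affinely independent, so this method cannot be pushed below $2d+1$.
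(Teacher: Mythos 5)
Your proof is correct and follows essentially the same route as the paper's: approximate $f$ by a map that is affine on the simplices of a fine subdivision, perturb the vertex images into general position, and conclude that the resulting piecewise-linear map is injective. The only difference is one of detail—you spell out the uniform-continuity estimate and, in particular, the injectivity argument (affine independence of the at most $2d+2$ vertex images of $\sigma \cup \tau$), which the paper asserts without proof.
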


\begin{proof}
The simplicial approximation theorem implies that $f$
can be approximated within $\epsilon/2$ by a piecewise linear map $g_0$.
For each vertex $x_i$ in the corresponding subdivision of $X$, in turn,
choose $y_i \in \R^{2d+1}$ within $\epsilon/2$ of $g_0(x_i)$
so that the $y_i$ are in general position.
Let $g \colon X \to \R^{2d+1}$ be the piecewise linear map, 
for the same subdivision, such that $g(x_i)=y_i$.
Then $g$ is injective, and $g$ is within $\epsilon/2$ of $g_0$,
so $g$ is within $\epsilon$ of $f$.
\end{proof}

%****************************************************************************
\section{Inverse limits of finite simplicial complexes}
\label{S:limits}

\begin{proposition}
\label{P:embedding inverse limit}
Let $(X_n)_{n \ge 0}$ be an inverse system of finite simplicial complexes
of dimension at most $d$
with respect to continuous maps $p_n \colon X_{n+1} \to X_n$.
Then the inverse limit $X \colonequals \varprojlim X_n$ embeds in $\R^{2d+1}$.
\end{proposition}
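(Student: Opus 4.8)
The plan is to realize $X = \varprojlim X_n$ as the image of a uniform limit of maps that each factor through a single finite complex $X_n$, and to arrange the construction so that this limit is injective. Since $X$ is compact and $\R^{2d+1}$ is Hausdorff, a continuous injection is automatically an embedding (as recalled at the start of Section~\ref{S:embeddings}), so injectivity is the only real issue.

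First I would record the elementary topology of the situation. The inverse limit $X$ is a closed subspace of the compact metrizable space $\prod_n X_n$, hence is itself compact and metrizable; fix a metric $d$ on $X$, and let $\pi_n \colon X \to X_n$ be the projections, so that $\pi_n = p_n \circ \pi_{n+1}$. I would then build embeddings $h_n \colon X_n \to \R^{2d+1}$ together with a summable sequence of positive reals $\epsilon_n$: start from any embedding $h_0$ (obtained by applying Lemma~\ref{L:simplicial approximation} to a constant map), and, given $h_n$, apply Lemma~\ref{L:simplicial approximation} to the continuous map $h_n \circ p_n \colon X_{n+1} \to \R^{2d+1}$ to get an embedding $h_{n+1}$ with $\sup_{X_{n+1}} |h_{n+1} - h_n \circ p_n| \le \epsilon_{n+1}$. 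Setting $F_n \colonequals h_n \circ \pi_n \colon X \to \R^{2d+1}$, the identity $\pi_n = p_n \circ \pi_{n+1}$ gives $|F_{n+1} - F_n| \le \epsilon_{n+1}$ pointwise on $X$, so if $\sum_n \epsilon_n < \infty$ the $F_n$ converge uniformly to a continuous map $F \colon X \to \R^{2d+1}$ with tail bound $|F - F_N| \le \sum_{m > N} \epsilon_m$.

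The main obstacle is injectivity: a uniform limit of the injective maps $F_n$ need not be injective, and because distinct points of $X$ can project to points of $X_N$ that $h_N$ separates by an arbitrarily small amount, no fixed bound on the error tails suffices. The key input is compactness. For $\eta > 0$ the closed sets $R_n \colonequals \{(p,q) : \pi_n(p) = \pi_n(q)\}$ decrease to the diagonal, so the decreasing compact sets $K_\eta \cap R_n$, where $K_\eta \colonequals \{(p,q) : d(p,q) \ge \eta\}$, have empty intersection and therefore one of them is already empty. The corresponding level $N(\eta)$ separates every pair of points at distance $\ge \eta$; hence $(p,q) \mapsto |F_{N(\eta)}(p) - F_{N(\eta)}(q)|$ is continuous and nowhere zero on the compact set $K_\eta$, so it attains a positive minimum $c(\eta) > 0$.

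I would then feed this back into the choice of the $\epsilon_n$. Processing the scales $\eta_k = 1/k$ in turn, once the construction reaches level $N(\eta_k)$ — an integer depending only on the inverse system, not on the embeddings — the number $c(\eta_k)$ is determined, and I would require all subsequent errors to satisfy $\sum_{m > N(\eta_k)} \epsilon_m < c(\eta_k)/4$ (in addition to $\epsilon_{n+1} \le 2^{-n}$). Only finitely many such constraints are active at any stage and each leaves a positive margin, so a positive $\epsilon_{n+1}$ can always be chosen and the sequence stays summable. The tail bound then gives, for any $p,q$ with $d(p,q) \ge \eta_k$, the estimate $|F(p) - F(q)| \ge c(\eta_k) - 2\sum_{m > N(\eta_k)} \epsilon_m \ge c(\eta_k)/2 > 0$. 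Since every pair of distinct points satisfies $d(p,q) \ge 1/k$ for some $k$, the map $F$ is injective, hence an embedding.
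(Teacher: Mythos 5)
Your overall strategy is the same as the paper's: build embeddings $h_n \colon X_n \to \R^{2d+1}$ inductively via Lemma~\ref{L:simplicial approximation}, pass to the uniform limit of $F_n = h_n \circ \pi_n$, and force injectivity by exhausting the set of off-diagonal pairs by countably many compact sets, on each of which some finite-stage map has a definite separation constant that the subsequent error tail is too small to destroy. The difference is in the bookkeeping: the paper exhausts $(X_m \times X_m) - \Delta_m$ at each \emph{finite} level by compacta $C_{mn}$ and pulls them back, so that stage $n$ involves a single compact set $K_n \subseteq X_n \times X_n$, a single constant $\alpha_n$ computed from the just-built $f_n$, and the simple chain $\epsilon_n < \alpha_n/4$, $\epsilon_n < \epsilon_{n-1}/2$; you instead work with the metric sets $K_\eta \subseteq X \times X$ and transfer to finite levels via the $R_n$-compactness argument (which is correct).

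However, your feasibility claim --- ``only finitely many such constraints are active at any stage'' --- is a genuine gap: it is false in general. Since $N(\eta_k)$ is nondecreasing in $k$, the set $\{k : N(\eta_k) \le n\}$ is finite for every $n$ if and only if $N(\eta_k) \to \infty$, which holds if and only if \emph{no} projection $\pi_N \colon X \to X_N$ is injective. Take the constant system $X_n = [0,1]$, $p_n = \mathrm{id}$: then $R_0$ is already the diagonal, so $N(\eta_k)=0$ for every $k$, and all the constraints $\sum_{m>0} \epsilon_m < c(\eta_k)/4$ are active at the very first step; since $X$ is infinite, there are distinct pairs at arbitrarily small distance, so uniform continuity of $F_0$ gives $\inf_k c(\eta_k) = 0$, and no positive $\epsilon_1$ can be chosen --- the construction halts. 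The repair is easy, in two ways: (1) if some $\pi_N$ is injective, then $F_N = h_N \circ \pi_N$ is a continuous injection from a compact space and hence already an embedding, so this degenerate case can be disposed of separately; or (2) stagger the constraints: impose the constraint for scale $\eta_k$ only at stage $M_k \colonequals \max(N(\eta_k), k)$, using the separation constant of $F_{M_k}$ on $K_{\eta_k}$, which is still positive because $R_{M_k} \subseteq R_{N(\eta_k)}$; then at most one new constraint appears per stage, and each has full margin when it appears, so your halving argument goes through. Note that the paper's indexing (the pieces $C_{mn}$, $m \le n$, are folded into the single set $K_n$ at level $n$, with $\alpha_n$ computed there) builds this staggering in automatically, which is why it never faces the problem.
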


\begin{proof}
For $m \ge 0$, let $\Delta_m \subseteq X_m \times X_m$ be the diagonal,
and write $(X_m \times X_m) - \Delta_m = \Union_{n=m}^\infty C_{mn}$
with $C_{mn}$ compact.
For $0 \le m \le n$,
let $D_{mn}$ be the inverse image of $C_{mn}$ in $X_n \times X_n$.
Let $K_n = \Union_{m=1}^n D_{mn}$.
Since $K_n$ is closed in $X_n \times X_n$, it is compact.

For $n \ge 0$, 
we inductively construct an embedding $f_n \colon X_n \to \R^{2d+1}$
and numbers $\alpha_n,\epsilon_n \in \R_{>0}$
such that the following hold for all $n \ge 0$:
\begin{enumerate}[\upshape (i)]
\item\label{I:K_n} 
  If $(x,x') \in K_n$, then $|f_n(x)-f_n(x')| \ge \alpha_n$.
\item\label{I:delta alpha} 
  $\epsilon_n < \alpha_n/4$.
\item\label{I:deltas} 
  $\epsilon_n < \epsilon_{n-1}/2$ (if $n \ge 1$).
\item\label{I:n to n+1}
  If $x \in X_{n+1}$, 
  then $|f_{n+1}(x)-f_n(p_n(x))| \le \epsilon_n$.
\end{enumerate}
Let $f_0 \colon X_0 \to \R^{2d+1}$ be any embedding
(apply Lemma~\ref{L:simplicial approximation} to a constant map,
for instance).
Now suppose that $n \ge 0$ and that $f_n$ has been constructed.
Since $f_n$ is injective and $K_n$ is compact, 
we may choose $\alpha_n \in \R_{>0}$ satisfying~\eqref{I:K_n}.
Choose any $\epsilon_n \in \R_{>0}$ satisfying 
\eqref{I:delta alpha} and~\eqref{I:deltas}.
Apply Lemma~\ref{L:simplicial approximation} to $p_n \circ f_n$
to find $f_{n+1}$ satisfying~\eqref{I:n to n+1}.
This completes the inductive construction.

Now $\sum_{i=n}^\infty \epsilon_i < 2 \epsilon_n < \alpha_n/2$
by \eqref{I:deltas} and~\eqref{I:delta alpha}.
Let $\fhat_n$ be the composition $X \to X_n \stackrel{f_n}\to \R^{2d+1}$.
For $x \in X$, \eqref{I:n to n+1} implies 
$|\fhat_{n+1}(x) - \fhat_n(x)| \le \epsilon_n$,
so the maps $\fhat_n$ converge uniformly to a continuous 
map $f \colon X \to \R^{2d+1}$
satisfying $|f(x) - f_n(x_n)| < \alpha_n/2$.

We claim that $f$ is injective.
Suppose that $x = (x_n)$ and $x'=(x'_n)$ are distinct points of $X$.
Fix $m$ such that $x_m \ne x_m'$.
Fix $n \ge m$ such that $(x_m,x_m') \in C_{mn}$.
Then $(x_n,x'_n) \in D_{mn} \subseteq K_n$.
By~\eqref{I:K_n}, $|f_n(x_n)-f_n(x_n')| \ge \alpha_n$.
On the other hand, 
$|f(x)-f_n(x_n)| < \alpha_n/2$ and $|f(x')-f_n(x'_n)| < \alpha_n/2$,
so $f(x) \ne f(x')$.
\end{proof}

\begin{remark}
\label{R:history}
Proposition~\ref{P:embedding inverse limit} was proved in the 1930s.
Namely, following a 1928 sketch by K.~Menger,
in 1931 it was proved independently in 
by S.~Lefschetz~\cite{Lefschetz1931},
G.~N\"obeling~\cite{Noebeling1931},
and L.~Pontryagin and G.~Tolstowa~\cite{Pontrjagin-Tolstowa1931}
that any compact metrizable space of dimension at most $d$
embeds in $\R^{2d+1}$.
The proofs proceed by using P.~Alexandroff's idea of approximating
compact spaces by finite simplicial complexes (nerves of finite covers),
so even if it not obvious that the 1931 \emph{result} applies directly 
to an inverse limit of finite simplicial complexes of dimension
at most $d$
(i.e., whether such an inverse limit is of dimension at most $d$),
the \emph{proofs} still apply.
And in any case, in 1937 H.~Freudenthal~\cite{Freudenthal1937} proved
that a compact metrizable space is of dimension at most $d$
if and only if it is an inverse limit of finite simplicial complexes
of dimension at most $d$.
See Sections 1.11 and~1.13 of~\cite{Engelking1978}
for more about the history, including later improvements.
\end{remark}

%****************************************************************************
\section{Fiber coproducts of valued fields}
\label{S:fiber coproducts}

We work in the category whose objects are valued fields
and whose morphisms are field homomorphisms respecting the absolute values.
For example, if $K$ is a valued field,
we have a natural morphism from $K$ to its completion $\widehat{K}$.
Given morphisms $i_1 \colon K \to L_1$ 
and $i_2 \colon K \to L_2$ of valued fields,
an \defi{amalgam} of $L_1$ and $L_2$ over $K$
is a triple $(M,j_1,j_2)$ where $M$ is a valued field
and $j_1 \colon L_1 \to M$ and $j_2 \colon L_2 \to M$ are morphisms
such that $j_1 \circ i_1 = j_2 \circ i_2$
and such that $M$ is generated by $j_1(L_1)$ and $j_2(L_2)$.
An \defi{isomorphism of amalgams} $(M,j_1,j_2) \to (M',j_1',j_2')$
is an isomorphism $\phi \colon M \to M'$ such that
$\phi \circ j_1 = j_1'$ and $\phi \circ j_2 = j_2'$.

\begin{proposition}
\label{P:fiber coproduct}
Given morphisms $K \to L_1$ and $K \to L_2$ of valued fields
such that $K$ is dense in $L_1$,
the fiber coproduct of $L_1$ and $L_2$ over $K$
exists and is the unique amalgam of $L_1$ and $L_2$ over $K$.
\end{proposition}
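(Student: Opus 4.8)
The plan is to realize the amalgam concretely inside a single completion and then to use the density hypothesis to pin it down uniquely. First I would record the standard fact that an isometric embedding $\sigma \colon F \injects G$ of valued fields into a \emph{complete} field $G$ extends uniquely to an isometric field embedding $\widehat{\sigma} \colon \widehat{F} \injects G$, obtained by sending the limit of a Cauchy sequence to the limit of its image; continuity of the field operations makes $\widehat{\sigma}$ a homomorphism. Applying this to $i_1 \colon K \to L_1$, density of $K$ in $L_1$ gives $\widehat{L_1} \isom \widehat{K}$, so I may regard $L_1$ as a subfield with $K \subseteq L_1 \subseteq \widehat{K}$. Applying it to $i_2 \colon K \to L_2 \injects \widehat{L_2}$ yields an isometric embedding $\widehat{i_2} \colon \widehat{K} \injects \widehat{L_2}$.

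For existence, I would set $j_1 \colonequals \widehat{i_2}|_{L_1} \colon L_1 \injects \widehat{L_2}$, take $j_2 \colon L_2 \injects \widehat{L_2}$ to be the canonical inclusion, and define $M$ to be the subfield of $\widehat{L_2}$ generated by $j_1(L_1)$ and $j_2(L_2)$. Then $M$ is generated by the two images by construction, and $j_1 \circ i_1 = \widehat{i_2}|_K = i_2 = j_2 \circ i_2$ as maps into $\widehat{L_2}$, so $(M,j_1,j_2)$ is an amalgam of $L_1$ and $L_2$ over $K$.

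The heart of the argument is uniqueness. Let $(M',j_1',j_2')$ be any amalgam, and extend $j_2' \colon L_2 \injects M' \subseteq \widehat{M'}$ to an isometric embedding $\widehat{j_2'} \colon \widehat{L_2} \injects \widehat{M'}$. The key claim, and the place where the density of $K$ in $L_1$ is indispensable, is the identity $j_1'(x) = \widehat{j_2'}(\widehat{i_2}(x))$ for every $x \in L_1$: choosing $x_n \in K$ with $x_n \to x$ in $L_1$, one has $j_1'(x) = \lim_n j_1'(i_1(x_n)) = \lim_n j_2'(i_2(x_n))$ by continuity of $j_1'$ and the compatibility condition, while $\widehat{j_2'}(\widehat{i_2}(x)) = \lim_n \widehat{j_2'}(i_2(x_n)) = \lim_n j_2'(i_2(x_n))$ by continuity of $\widehat{j_2'} \circ \widehat{i_2}$. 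Consequently $\widehat{j_2'}$ carries the generators $j_1(L_1) = \widehat{i_2}(L_1)$ and $j_2(L_2) = L_2$ of $M$ onto the generators $j_1'(L_1)$ and $j_2'(L_2)$ of $M'$, so its restriction $\phi \colonequals \widehat{j_2'}|_M$ is an isometric field isomorphism $M \isomto M'$ with $\phi \circ j_1 = j_1'$ and $\phi \circ j_2 = j_2'$, i.e.\ an isomorphism of amalgams.

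Finally, the universal property of the fiber coproduct would follow formally from uniqueness: given any valued field $N$ with morphisms $g_1 \colon L_1 \to N$ and $g_2 \colon L_2 \to N$ satisfying $g_1 \circ i_1 = g_2 \circ i_2$, the subfield of $N$ generated by $g_1(L_1)$ and $g_2(L_2)$ is an amalgam, hence isomorphic to $M$ via a map compatible with the $j$'s; composing with the inclusion into $N$ produces the required factorization $M \to N$, unique because $M$ is generated by $j_1(L_1)$ and $j_2(L_2)$. The only real obstacle is the uniqueness step, whose entire force comes from the density identity above: without the hypothesis that $K$ is dense in $L_1$ there are typically many non-isomorphic amalgams, so the construction genuinely cannot be made canonical.
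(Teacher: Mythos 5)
Your proof is correct and follows essentially the same route as the paper: both realize the amalgam as the compositum $L_1L_2$ inside $\widehat{L}_2$ via the unique continuous extension of $K \to \widehat{L}_2$ to $L_1$, and both use density of $K$ in $L_1$ together with continuity (of isometric morphisms into a completion) to force any competing map on $L_1$ to agree. The only difference is organizational: you prove uniqueness of the amalgam first and then deduce the universal property formally by restricting to the generated subfield, whereas the paper verifies the universal property directly for an arbitrary test object $M'$ (embedding everything in $\widehat{M}'$) and obtains amalgam uniqueness as a corollary.
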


\begin{proof}
Since $K$ is dense in $L_1$,
the composition $K \to L_2 \to \widehat{L}_2$ 
extends uniquely to $L_1 \to \widehat{L}_2$.
Hence we may view $K$, $L_1$, and $L_2$
as subfields of $\widehat{L}_2$, and the morphisms
$K \to L_1 \to \widehat{L}_2$ 
and $K \to L_2 \to \widehat{L}_2$
as inclusions.
Let $M \colonequals L_1 L_2 \subseteq \widehat{L}_2$.
Now, given any $M'$ in a commutative diagram
\[
\xymatrix{
& M' \\
L_1 \ar[ru] && L_2 \ar[lu] \\
& K \ar[lu] \ar[ru] \\
}
\]
we may view one of the two upper morphisms, say $L_2 \to M'$, 
as an inclusion.
Then all the fields become subfields of $\widehat{M}'$.
Now the other upper morphism $L_1 \to M'$ is an inclusion too
since the composition
$L_1 \to M' \injects \widehat{M}'$
restricts to the inclusion morphism on the dense subfield $K$.
Thus $M = L_1 L_2 \subseteq M'$,
and there is a unique morphism $M \to M'$
compatible with the morphisms from $L_1$ and $L_2$,
namely the inclusion.
Thus $M$ is a fiber coproduct.

The existence of a fiber coproduct implies
that at most one amalgam exists.
Since $M$ is generated by $L_1$ and $L_2$, it is an amalgam.
\end{proof}

\begin{remark}
\label{R:amalgams in larger category}
In \cite{Hrushovski-Loeser-preprint},
the value groups of valued fields
are not necessarily contained in $\R$.
Proposition~\ref{P:fiber coproduct} remains true in the larger category,
and the amalgams in the two categories coincide
when they make sense,
i.e., when the valued fields in question happen
to have value group contained in $\R$.
\end{remark}

%****************************************************************************
\section{Berkovich spaces over noncomplete fields}
\label{S:noncomplete fields}

Berkovich analytifications were originally defined only when 
the valued field $K$ was complete~\cite{Berkovich1990}.
For a quasi-projective variety $V$ 
over an \emph{arbitrary} valued field $K$,
\cite{Hrushovski-Loeser-preprint}*{Section~13.1}
defines a topological space in terms of types,
and proves that it is homeomorphic to $V^{\an}$ when $K$ is complete.
 
This definition uses types over $K \union \R$.  
Using quantifier elimination for the theory of 
algebraically closed valued fields in the two-sorted language 
consisting of the valued field and the value group,
such types can be identified with pairs $(L,c)$ 
with $L$ an $\R$-valued field extension of $K$,
where $(L,c)$ is identified with $(L',c')$ 
if there exists a $K$-isomorphism $f \colon L \to L'$ of $\R$-valued fields 
with $f(c)=c'$.   
This description makes it clear that if $v$ is the type of $(K',a)$, 
then an extension of $v$ to $L \geq K$ 
corresponds precisely to an amalgam of $K'$ and $L$ over $K$.  

The restriction map $r$ from types over $L$ to types over $K'$ 
(where $K \leq K' \leq L$)  takes $(L,a)$ to $(K'(a),a)$.
If $h \colon V \to W$ is a morphism of varieties over $K$, 
the restriction map $r$ is clearly compatible 
with the natural map from types on $V$ to types on $W$ induced by $h$. 

We take this space of types as a definition of 
the topological space $V^{\an}$ for arbitrary valued fields $K$.
The following proposition shows that no new spaces arise: 
it would have been equivalent
to define $V^{\an}$ as $(V_{\widehat{K}})^{\an}$ 
(the subscript denotes base extension).

\begin{proposition}
\label{P:base extension of Berkovich space}
Let $K \le L$ be an extension of valued fields such that $K$ is dense in $L$.
Let $V$ be a quasi-projective $K$-variety.
Then $(V_L)^{\an}$ is naturally homeomorphic to $V^{\an}$.
\end{proposition}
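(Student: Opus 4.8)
The plan is to exhibit a natural bijection between the two spaces of types and then check that it is a homeomorphism. Recall that $V^{\an}$ is defined as a space of types over $K \cup \R$, which the excerpt identifies with pairs $(K', c)$ where $K'$ is an $\R$-valued field extension of $K$ (taken up to $K$-isomorphism carrying the distinguished point); similarly $(V_L)^{\an}$ is the space of such pairs with $K$ replaced by $L$. The key observation, already flagged in the text just before the statement, is that an extension of a type $v$ on $V$ to the larger field $L$ corresponds precisely to an amalgam of $K'$ and $L$ over $K$. The hypothesis that $K$ is dense in $L$ is exactly what lets me invoke Proposition~\ref{P:fiber coproduct}: the amalgam of $K'$ and $L$ over $K$ exists and is \emph{unique}. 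This uniqueness is the whole point---it says the extension of $v$ from $K$ to $L$ exists and is canonical, so the restriction map $r$ from types over $L$ to types over $K$ is a bijection onto $V^{\an}$.

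First I would set up the restriction map $r \colon (V_L)^{\an} \to V^{\an}$, which sends $(M, a)$ to $(K'(a), a)$ where $K'$ is the image of $K$; as the excerpt notes, this is the natural map on types induced by the base extension $V_L \to V$ and is continuity-preserving by functoriality. Next I would construct the inverse: given a type $(K', c) \in V^{\an}$, form the unique amalgam $(M, j_1, j_2)$ of $K'$ and $L$ over $K$ supplied by Proposition~\ref{P:fiber coproduct} (here $K$ dense in $L$ is used), and send $(K', c)$ to $(M, j_1(c))$. The uniqueness clause of Proposition~\ref{P:fiber coproduct} guarantees this is well defined on isomorphism classes, and a short check shows $r$ and this assignment are mutually inverse: composing one way recovers the amalgam by its universal property, and composing the other way uses that $K'(a)$ together with $L$ generates $M$, so $M$ is itself the amalgam. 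This establishes that $r$ is a natural bijection.

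The remaining task is topological: to upgrade the bijection $r$ to a homeomorphism. Both spaces are defined via the type topology (equivalently, as subspaces of spaces of semivaluations / valuations on the relevant coordinate rings), and $r$ is continuous by functoriality. I would argue that $r$ is also open, or equivalently that its inverse is continuous, by checking it on a basis of the type topology---basic opens are cut out by finitely many conditions of the form $\val(f) < \val(g)$ or $\val(f) \le \val(g)$ for polynomials $f, g$, and since $K$ is dense in $L$ one can approximate $L$-coefficients by $K$-coefficients to match a basic open on one side with one on the other. Alternatively, if one can first reduce to the case where $V$ is projective (hence $V^{\an}$ and $(V_L)^{\an}$ are compact Hausdorff, which follows from the Berkovich-theoretic interpretation), then a continuous bijection between compact Hausdorff spaces is automatically a homeomorphism, and the open quasi-projective case follows by covering $V$ by affines or embedding it in a projective closure.

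I expect the main obstacle to be precisely this last continuity-of-the-inverse step: the bijection is clean and essentially formal, but verifying that the type topologies match across the base extension requires care with how the density of $K$ in $L$ interacts with the defining inequalities of the topology. Reducing to the compact case to exploit the ``continuous bijection of compacta is a homeomorphism'' shortcut is the cleanest route and is what I would pursue first; the density hypothesis should make the approximation of $L$-data by $K$-data routine once the framework is in place.
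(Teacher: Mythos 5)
Your proposal follows essentially the same route as the paper's proof: the restriction map on types is a bijection because Proposition~\ref{P:fiber coproduct} makes the amalgam of $K'$ and $L$ over $K$ unique, the projective case is settled by noting that a continuous bijection of compact Hausdorff spaces is a homeomorphism, and the quasi-projective case follows by realizing $V$ as an open subscheme of a projective closure $P$ and restricting $r_P$. The direct approximation-of-basic-opens alternative you mention is unnecessary, and the compactness shortcut you correctly prefer is exactly what the paper does.
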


\begin{proof}
Restriction of types defines a continuous map
$r_V \colon (V_L)^{\an} \to V^{\an}$.
A point $v \in V^{\an}$ is represented by the type of some $a \in V(K')$
for some valued field extension $K'=K(a)$ of $K$;
then $r_V^{-1}(v)$ is in bijection with the set of 
amalgams of $K'$ and $L$ over $K$, 
which by Proposition~\ref{P:fiber coproduct} is a set of size~$1$.
Thus $r_V$ is a bijection.

If $V$ is projective, then $V^{\an}$ and $(V_L)^{\an}$ are
compact Hausdorff spaces~\cite{Hrushovski-Loeser-preprint}*{Proposition~13.1.2},
so the continuous bijection $r_V$ is a homeomorphism.
If $V$ is an open subscheme of a projective variety $P$,
then $(V_L)^{\an}$ and $V^{\an}$ are open subspaces 
of $(P_L)^{\an}$ and $P^{\an}$, respectively,
and $r_P$ restricts to $r_V$,
so the result for $P$ implies the result for $V$.
\end{proof}

%****************************************************************************
\section{Embeddings of Berkovich spaces}
\label{S:embeddings of Berkovich spaces}

\begin{proposition}
\label{P:structure of V^an}
Let $K$ be a valued field having a countable dense subset.
Let $V$ be a \emph{projective} $K$-scheme of dimension $d$.
Then $V^{\an}$ is homeomorphic to 
a inverse limit $\varprojlim X_n$
where each $X_n$ is a finite simplicial
complex of dimension at most $d$
and each map $X_{n+1} \to X_n$ is continuous.
\end{proposition}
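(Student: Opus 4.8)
The plan is to realize $V^{\an}$ as a filtered inverse limit of finite simplicial complexes, using the machinery cited in the introduction, and then to extract a countable cofinal subsystem so that Proposition~\ref{P:embedding inverse limit} applies. The starting point is \cite{Hrushovski-Loeser-preprint}*{Theorem~13.2.4}, which presents $V^{\an}$ (for $V$ projective) as a filtered inverse limit $\varprojlim_{i \in I} X_i$ of finite simplicial complexes $X_i$, where the transition maps are continuous (in fact strong deformation retractions), and where each $X_i$ has dimension at most $d = \dim V$. At this stage the index set $I$ is merely a filtered (directed) poset, not necessarily countable, so the result is not yet in the form demanded by the proposition.

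The main work, and the step where the countability hypothesis on $K$ is used, is to show that $I$ admits a countable cofinal subset. This is where I would invoke Sections~\ref{S:fiber coproducts} and~\ref{S:noncomplete fields}. By Proposition~\ref{P:base extension of Berkovich space} we are free to replace $K$ by any dense subfield, and the hypothesis gives us a countable dense subfield $K_0 \subseteq K$ (take the subfield generated by the given countable dense subset; one checks it is still dense and countable). The idea is that the combinatorial data indexing the simplicial complexes $X_i$ in the Hrushovski--Loeser presentation is governed by finitely generated subextensions and by finitely many semialgebraic/definable parameters, and these can all be defined over $K_0$; since $K_0$ is countable, only countably many such finite configurations arise, yielding a countable cofinal family among the $X_i$. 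Concretely, I would argue that the skeleta appearing in Theorem~13.2.4 are cofinally indexed by finite sets of data defined over a countable field, so a countable cofinal subset $\{i_0, i_1, i_2, \dots\} \subseteq I$ exists; passing to a cofinal chain $i_0 \le i_1 \le \cdots$ then gives a sequence $(X_n)_{n \ge 0}$ with $X_{n+1} \to X_n$ continuous whose inverse limit is homeomorphic to $\varprojlim_{i \in I} X_i \cong V^{\an}$, because an inverse limit over a directed set is unchanged (up to canonical homeomorphism) when restricted to a cofinal subset.

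The hard part will be justifying the countability of a cofinal subset rigorously, since this requires looking inside the structure of the cited limit theorem rather than treating it as a black box: one must identify exactly what data indexes the finite simplicial complexes and verify that these data descend to $K_0$ and are countable in number. The role of Proposition~\ref{P:fiber coproduct} and Proposition~\ref{P:base extension of Berkovich space} is precisely to license the descent to $K_0$ without changing the homeomorphism type of $V^{\an}$, so the argument splits cleanly into a descent step and a counting step.

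Once the countable cofinal chain is in hand, the conclusion is immediate: we obtain finite simplicial complexes $X_n$ of dimension at most $d$ with continuous transition maps $X_{n+1} \to X_n$ and $V^{\an} \cong \varprojlim_n X_n$, which is exactly the asserted form of the proposition (and feeds directly into Proposition~\ref{P:embedding inverse limit} to prove Theorem~\ref{T:main}).
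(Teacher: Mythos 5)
Your outline assembles the right ingredients (\cite{Hrushovski-Loeser-preprint}*{Theorem~13.2.4}, Proposition~\ref{P:base extension of Berkovich space}, countability of the index set, passage to a cofinal chain), and in outline it is the paper's argument, but the execution has a concrete gap in the descent step. Proposition~\ref{P:base extension of Berkovich space} compares $(V_0)^{\an}$ with $((V_0)_K)^{\an}$ for a variety $V_0$ \emph{defined over} the dense subfield; it does not by itself let you ``replace $K$ by any dense subfield.'' Your $K_0$ --- the subfield generated by the given countable dense subset --- need not contain the coefficients of any defining equations of $V$, so there may be no $K_0$-model of $V$ to which the proposition could be applied. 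The missing (easy, but necessary) step is the one the paper takes: since $V$ is of finite presentation over $K$, it is the base extension of a projective scheme $V_0$ over some countable subfield of $K$ (a field of definition), and one then \emph{adjoins} a countable dense subset of $K$ to that subfield, so that the resulting $K_0$ is simultaneously countable, dense, and a field of definition; only then does Proposition~\ref{P:base extension of Berkovich space} give $V^{\an} \cong (V_0)^{\an}$.

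The second difference is one of order, and it determines how hard the remaining work is. You apply \cite{Hrushovski-Loeser-preprint}*{Theorem~13.2.4} over $K$ first and then try to extract a countable cofinal subfamily of the (typically uncountable) index set $I$ by arguing that the indexing data descends to $K_0$; you correctly flag this as the hard part, and as set up it is genuinely delicate, since it asks for cofinality of $K_0$-definable configurations among $K$-definable ones inside a fixed presentation --- a statement that is not what Proposition~\ref{P:base extension of Berkovich space} provides. The paper inverts the order: descend to $V_0$ over the countable field $K_0$ \emph{first}, then apply \cite{Hrushovski-Loeser-preprint}*{Theorem~13.2.4} to $(V_0)^{\an}$; since $K_0$ is countable, the proof of that theorem already produces a countable index set, and a countable directed set admits a cofinal chain, giving the sequence $(X_n)$ at once. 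So the step you label as the hard part dissolves entirely once the descent is performed at the level of the variety rather than at the level of the inverse system; your final observation that restricting to a cofinal chain does not change the inverse limit is correct and is all that remains.
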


\begin{proof}
First suppose that $K$ is countable.
Since $V$ is projective, $V^{\an}$ is compact,
so we may apply \cite{Hrushovski-Loeser-preprint}*{Theorem~13.2.4} 
to $V^{\an}$ to obtain that $V^{\an}$ is 
a filtered limit of finite simplicial complexes over an index set $I$.
Since $K$ is countable, 
the proof of \cite{Hrushovski-Loeser-preprint}*{Theorem~13.2.4} 
shows that $I$ may be taken to be countable,
so our limit may be taken over a sequence,
as desired.

Now assume only that $K$ has a countable dense subset.
Since $V$ is of finite presentation over $K$,
it is the base extension of a projective scheme $V_0$
over a countable subfield $K_0$ of $K$.
By adjoining to $K_0$ a countable dense subset of $K$,
we may assume that $K_0$ is dense in $K$.
By Proposition~\ref{P:base extension of Berkovich space},
$V^{\an}$ is homeomorphic to $(V_0)^{\an}$,
which has already been shown to be an inverse limit of the desired form.
\end{proof}

\begin{proposition}
\label{P:open subscheme}
Let $K$ be a complete valued field.
If $U$ is an open subscheme of $V$,
then the induced map $U^{\an} \to V^{\an}$ is a homeomorphism
onto an open subspace.
\end{proposition}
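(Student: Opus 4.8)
The plan is to reduce the statement to the case of a distinguished affine open, where it becomes an explicit computation with multiplicative seminorms, and then to globalize. Since $K$ is complete, $V^{\an}$ is the Berkovich analytification in the usual sense (equivalently, the space of types of Section~\ref{S:noncomplete fields}), so a point $x$ of $V^{\an}$ lying over an affine open $\Spec A \subseteq V$ is a multiplicative seminorm $f \mapsto |f(x)|$ on $A$ extending $|\;|$ on $K$, with the topology of pointwise convergence. The first fact I would record is the continuity of the support map $s_V \colon V^{\an} \to V$ sending $x$ to the prime $\{f : |f(x)| = 0\}$ (equivalently, sending a type $(L,a)$ to the image of $\Spec L \to V$): on an affine open $\Spec A$ the preimage of a distinguished open $D(f)$ is $\{x : |f(x)| > 0\}$, which is open, so $s_V$ is continuous, and by construction it is functorial, i.e.\ compatible with the inclusion $j \colon U \hookrightarrow V$.

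First I would treat $V = \Spec A$ affine and $U = D(f) = \Spec A_f$ a distinguished open. A seminorm $x$ on $A$ extends to $A_f$ precisely when $|f(x)| > 0$, and the extension is then unique, given by $|(g/f^n)(x)| = |g(x)|/|f(x)|^n$. Hence $j^{\an} \colon U^{\an} \to V^{\an}$ is a bijection onto the open set $\{x : |f(x)| > 0\} = s_V^{-1}(U)$; it is continuous since it is induced by a ring map, and its inverse is continuous because each generating function $|(g/f^n)(-)|$ on $U^{\an}$ is, by the relation above, a continuous function of $|g(-)|$ and $|f(-)|$ on the open subset $\{|f|>0\}$ of $V^{\an}$. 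This settles the distinguished-affine case.

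For the general statement I would globalize in two steps, using only the case just proved so as to avoid circularity. For $V$ still affine and $U \subseteq V$ an arbitrary open, cover $U$ by distinguished opens $D(f_i)$; by the building-block case each $D(f_i)^{\an} \to V^{\an}$ is a homeomorphism onto the open set $s_V^{-1}(D(f_i))$, these cover $s_V^{-1}(U)$, and the maps agree on overlaps, so $U^{\an} \to V^{\an}$ is a homeomorphism onto the open set $s_V^{-1}(U)$. Finally, for general quasi-projective $V$ I would choose an affine open cover $V = \bigcup_k V_k$; since $V^{\an}$ is assembled from the $V_k^{\an}$ by gluing along the $(V_k \cap V_l)^{\an}$, each $V_k^{\an}$ is open in $V^{\an}$ with its intrinsic topology, and I would apply the affine case to $U \cap V_k \subseteq V_k$ and glue. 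Injectivity of $j^{\an}$ and the identification $\im(j^{\an}) = s_V^{-1}(U)$ hold globally because, for $a \in V(L)$, factoring through the open subscheme $U$ is equivalent to $s_V(a) \in U$, and the factorization is then unique.

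The main obstacle is the homeomorphism-onto-image claim rather than the openness of the image: openness is immediate once $s_V$ is known to be continuous, whereas matching the intrinsic topology of $U^{\an}$ with the subspace topology requires the explicit seminorm formula and, more delicately, care in the globalization so that the only input is the already-established distinguished-affine case, keeping the argument free of circularity with the statement being proved.
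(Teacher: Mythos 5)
Your proposal is correct, but it takes a genuinely different route from the paper: the paper's entire proof is a one-line citation to \cite{Berkovich1990}*{Proposition~3.4.6(8)}, whereas you give a self-contained argument from the seminorm description of the analytification. In effect you have unpacked what that citation contains: the distinguished-affine computation (a multiplicative seminorm on $A$ with $|f(x)|>0$ extends uniquely to $A_f$ via $|g/f^n| = |g|/|f|^n$, with the two topologies matching because the generating functions of each side are continuous in terms of the other), then the bootstrap to arbitrary opens of affines, then the global gluing; this is essentially how the open-immersion property is established within Berkovich's construction. One point deserves emphasis: your final globalization step is not a deduction so much as an appeal to how $V^{\an}$ is \emph{defined} for non-affine $V$ --- the only construction available is gluing the $V_k^{\an}$ along the $(V_k \cap V_l)^{\an}$ (or a universal property whose verification is that gluing) --- so the assertion that each $V_k^{\an}$ sits as an open subspace of $V^{\an}$ is definitional input rather than something you prove; your ordering (distinguished opens, then opens of affines, then general schemes) is exactly what makes that construction well posed, and your attention to circularity is the right instinct, since that is the only real danger here. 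As for what each approach buys: the paper's citation is appropriate because the proposition is a standard property of analytification and nothing downstream depends on its internal mechanics, while your version makes the argument self-contained and shows the statement is elementary from the definition, at the cost of re-deriving a piece of the construction of $V^{\an}$.
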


\begin{proof}
See~\cite{Berkovich1990}*{Proposition~3.4.6(8)}.
\end{proof}

Theorem~\ref{T:main} follows immediately from 
Propositions \ref{P:embedding inverse limit}, 
\ref{P:structure of V^an}, and~\ref{P:open subscheme}.

%****************************************************************************
\section{Dendrites}\label{S:dendrites}

When $V$ is a curve, more can be said about $V^{\an}$.
But first we recall some definitions and facts from topology.

\subsection{Definitions}

A \defi{continuum} is a compact connected metrizable space
(the empty space is not connected).
A \defi{simple closed curve} in a topological space
is any subspace homeomorphic to a circle.
A \defi{dendrite} is a locally connected continuum
containing no simple closed curve.
Dendrites may be thought of as topological generalizations of trees 
in which branching may occur at a dense set of points.
A point $x$ in a dendrite $X$ is called a \defi{branch point}
if $X-\{x\}$ has three or more connected components.

\subsection{Wa{\.z}ewski's theorems}

The following three theorems 
were proved by T.~Wa{\.z}ewski 
in his thesis~\cite{Wazewski1923}.\footnote{Actually, Wa{\.z}ewski 
used a different, equivalent definition: 
for him, a dendrite was any image $D$ of a continuous map 
$[0,1] \to \R^n$ such that $D$ contains no simple closed curve.
A dendrite in Wa{\.z}ewski's sense is a dendrite in our sense
by \cite{Nadler1992}*{Corollary~8.17}.
Conversely, a dendrite in our sense embeds in $\R^2$
by \cite{Nadler1992}*{Section~10.37}
(or, alternatively, is an inverse limit of finite trees
by \cite{Nadler1992}*{Theorem~10.27} 
and hence embeds in $\R^3$ by Proposition~\ref{P:embedding inverse limit}),
and is a continuous image of $[0,1]$ by the 
Hahn--Mazurkiewicz theorem \cite{Nadler1992}*{Theorem~8.14}.}

\begin{theorem}
\label{T:Wazewski}
Up to homeomorphism, there is a unique dendrite $W$
such that its branch points are dense in $W$
and there are $\aleph_0$ branches at each branch point.
\end{theorem}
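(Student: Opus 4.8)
The plan is to prove Theorem~\ref{T:Wazewski} by establishing a uniqueness-by-back-and-forth result for dendrites with the stated properties, which is the standard technique for characterizing ``universal'' or ``homogeneous'' objects of this kind. I would first observe that any such dendrite $W$ is an inverse limit of finite trees (by \cite{Nadler1992}*{Theorem~10.27}), and that finite trees can be approximated arbitrarily well inside $W$ by taking a finite set of branch points, connecting them by the unique arcs they span in $W$ (unique because $W$ contains no simple closed curve), and retracting $W$ onto the resulting subtree. The hypotheses---branch points dense and $\aleph_0$ branches at each---should let me realize \emph{every} finite tree as such a subtree of $W$, and more importantly let me extend any embedding of a finite tree into $W$ by attaching one more edge at any prescribed point.

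\begin{proof}[Proof sketch]
Let $W$ and $W'$ both satisfy the hypotheses. I would build a homeomorphism by a back-and-forth argument on finite subtrees. Choose countable dense sets $D = \{a_1, a_2, \dots\}$ in $W$ and $D' = \{a_1', a_2', \dots\}$ in $W'$, each consisting of branch points (possible since branch points are dense). I would inductively construct an increasing sequence of finite subtrees $T_n \subseteq W$ and $T_n' \subseteq W'$ together with homeomorphisms $\phi_n \colon T_n \to T_n'$ carrying branch points of $W$ lying on $T_n$ to branch points of $W'$ lying on $T_n'$, and respecting the cyclic/branching data at each such point, such that $a_n \in T_{2n}$ and $a_n' \in T_{2n}'$. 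The limit $\phi = \bigcup_n \phi_n$ then extends by continuity and density to the desired homeomorphism $W \to W'$.

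The key inductive step is the following extension lemma: given a homeomorphism $\phi_n \colon T_n \to T_n'$ of finite subtrees and a new branch point $a \in W$ to be adjoined, I must enlarge $T_n$ to a finite subtree $T_{n+1}$ containing $a$ and extend $\phi_n$ to $T_{n+1}$. Write $p$ for the point of $T_n$ nearest to $a$ (the unique point where the arc from $a$ meets $T_n$); this $p$ is well defined because $W$ is a dendrite, so distinct arcs from $a$ cannot reconverge. The arc $[p,a]$ together with $a$'s branches must be matched in $W'$: I set $p' \colonequals \phi_n(p)$ and must find in $W'$ a branch point $a'$ and an arc $[p',a']$ meeting $T_n'$ exactly at $p'$. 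Here the condition that there are $\aleph_0$ branches at every branch point is essential---it guarantees that at $p'$ there is always an unused direction into which to route the new arc, and that once $a'$ is placed I can continue the process indefinitely without exhausting the branches. The local structure ($\aleph_0$ branches) at $a$ and $a'$ lets me match them as branch points.

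The main obstacle I anticipate is bookkeeping the local combinatorial data so that the maps $\phi_n$ remain compatible and the union is genuinely a homeomorphism rather than merely a bijection: I must ensure that the finite subtrees $T_n$ exhaust a dense subset of $W$ (achieved by forcing $a_n \in T_{2n}$) and that $\phi_n$ is uniformly controlled enough that $\bigcup \phi_n$ extends continuously. Since a dendrite is a compact metric space and the arcs joining finitely many branch points converge to all of $W$, density of $\bigcup T_n$ gives a uniformly continuous dense-to-dense bijection that extends to a homeomorphism of the completions, which are $W$ and $W'$. Existence of at least one such $W$ follows from the footnote's inverse-limit description (e.g., build the nested finite trees directly, adding countably many branches at each new branch point at each stage), so uniqueness is the substantive content.
\end{proof}
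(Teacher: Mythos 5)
First, a point of reference: the paper does not prove Theorem~\ref{T:Wazewski} at all --- it is quoted, together with Theorems~\ref{T:universal} and~\ref{T:traceable}, from Wa{\.z}ewski's 1923 thesis \cite{Wazewski1923} --- so there is no internal argument to compare yours against; your back-and-forth plan is indeed the standard modern strategy for this kind of uniqueness statement. But as written it has a genuine gap at its key step. The attachment point $p$ of the new arc is forced on you (it is the first point where the arc from $a$ meets $T_n$), it is in general a branch point of $W$ interior to an edge of $T_n$, and $p' = \phi_n(p)$ is likewise forced once $\phi_n$ is fixed; so the extension succeeds only if $\phi_n$ carries \emph{every} branch point of $W$ lying on $T_n$ to a branch point of $W'$ lying on $T_n'$ (and its inverse does the same, for the ``back'' steps). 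You correctly smuggle this into the inductive hypothesis, but you never show it can be achieved, and that is where the real work lives: the branch points of $W$ lying on an edge of $T_n$ form a countably infinite \emph{dense} subset of that edge (countable by Kuratowski's theorem that a dendrite has only countably many branch points \cite{Kuratowski1968}; dense because the first-point retraction onto the edge is continuous, so a subarc free of attachments would be an open subset of $W$ containing no branch point). Hence already the base case $\phi_0$, and each extension over a new edge $[p,a]\to[p',a']$, must itself be built by a Cantor-style back-and-forth matching two countable dense subsets of arcs, and one must verify that this branch-point-respecting property survives every outer step. None of this inner induction appears in your sketch; without it the step ``there is always an unused direction at $p'$'' is vacuous, because $p'$ need not be a branch point of $W'$ at all.

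The second gap is the passage to the limit. The claim that ``density of $\bigcup T_n$ gives a uniformly continuous dense-to-dense bijection'' is not a valid inference: a compatible union of homeomorphisms between nested finite trees with dense unions need not be uniformly continuous, and density by itself gives no modulus of continuity. What is actually needed is a mesh lemma: for an increasing sequence of subtrees of a dendrite with dense union, the diameters of the connected components of $W \setminus T_n$ tend to $0$ uniformly in $n$. This is true but requires a real argument (compactness plus uniform local connectedness, together with the fact that each component of $W\setminus T_n$ is separated from $T_n$ by a single attachment point), and one needs the same statement in $W'$ --- which is exactly why the ``back'' half of your back-and-forth is indispensable. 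Granted the mesh lemma, the maps $\phi_n\circ r_n$, with $r_n\colon W\to T_n$ the first-point retraction, converge uniformly to the desired homeomorphism; this, rather than an appeal to completions, is the correct way to finish. Finally, note that the theorem asserts existence as well as uniqueness; your one-line delegation of existence to a nested-trees construction also needs care (the newly attached branches must have diameters tending to $0$ to keep the limit compact and locally connected, and one must check that in the limit the branch points are dense and each has exactly $\aleph_0$ branches), though this part is routine compared with uniqueness.
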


The dendrite $W$ in Theorem~\ref{T:Wazewski}
is called the \defi{Wa{\.z}ewski universal dendrite}.

\begin{theorem}
\label{T:universal}
Every dendrite embeds in $W$.
\end{theorem}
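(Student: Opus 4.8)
The plan is to realize the given dendrite $D$ as a subdendrite of $W$ by approximating $D$ from inside by finite trees and transporting these trees into $W$ compatibly, one branch at a time. First I would fix a representation of $D$ as a nested union of finite trees: choose a countable dense sequence $d_1, d_2, \dots$ in $D$ and let $A_n$ be the convex hull (union of the connecting arcs) of $\{d_1,\dots,d_n\}$. Each $A_n$ is a finite tree, $A_n \subseteq A_{n+1}$, and $\overline{\bigcup_n A_n} = D$; passing to the inverse limit along the nearest-point retractions $\rho_n \colon A_{n+1} \to A_n$ recovers $D \cong \varprojlim(A_n,\rho_n)$ (cf.\ \cite{Nadler1992}*{Theorem~10.27}). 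Here $A_{n+1}$ is obtained from $A_n$ by attaching a single arc $[p_{n+1},d_{n+1}]$ at the point $p_{n+1} = \rho_n(d_{n+1}) \in A_n$.

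The core is an inductive construction of nested finite subtrees $S_n \subseteq S_{n+1} \subseteq W$ together with homeomorphisms $\psi_n \colon A_n \to S_n$ satisfying $\psi_{n+1}|_{A_n} = \psi_n$. Because the nearest-point retraction of a dendrite onto a subtree is a topological invariant, the retraction $W \to S_n$ automatically carries $S_{n+1}$ onto $S_n$ in a way compatible with $\rho_n$, once the $S_n$ are nested and the $\psi_n$ extend one another. The $\psi_n$ then assemble into an isomorphism of inverse systems, and hence into a homeomorphism $D = \varprojlim A_n \to \varprojlim S_n = \overline{\bigcup_n S_n} \subseteq W$, which is the desired embedding. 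Thus the whole theorem reduces to carrying out the inductive attaching step inside $W$.

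The inductive step asks us to attach a new arc to $S_n$ at the point $\psi_n(p_{n+1})$, running off into a local direction not already occupied by $S_n$. This is where the defining properties of $W$ enter, through two facts I would establish first. (a) The branch points of $W$ are dense not merely in $W$ but in every arc of $W$: if some subarc contained no branch point, then taking a branch point of $W$ converging to an interior point of that subarc and applying the nearest-point retraction onto the subarc produces a branch point inside it, a contradiction. (b) For every branch point $x$ and every component $C$ of $W\setminus\{x\}$, the closure $\overline{C}$ is a dendrite whose branch points are dense and all of order $\aleph_0$ (orders at points of $C$ are computed in the open set $C$, so they agree with those in $W$), whence $\overline{C} \cong W$ by Theorem~\ref{T:Wazewski}. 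Fact (b) provides unlimited room: at a branch point of $W$ only finitely many of its $\aleph_0$ branches are used by the finite tree $S_n$, so a free branch is always available, and that free branch is itself a copy of $W$ into which any finite tree embeds.

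The main obstacle is that the attaching points $\psi_n(p_{n+1})$ must be \emph{genuine} branch points of $W$ carrying a free branch, whereas the points $p_{n+1}$ typically lie in the interiors of edges of $A_n$ — indeed, when $D$ has branch points accumulating everywhere, new branch points must be created at edge interiors at every scale, so this case is unavoidable. I would resolve it using advance knowledge of the entire sequence $(d_i)$: when an edge of some $A_n$ is first laid down and embedded as an arc of $W$, the full collection of future attaching points falling in that edge is a prescribed countable subset of it, and by fact (a), together with an order-theoretic back-and-forth between countable linear orders, I can choose the embedding of that arc so that each prescribed point lands on a branch point of $W$ possessing infinitely many free branches. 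Maintaining this bookkeeping throughout the induction — so that edges always map to arcs rich in branch points and each newly created vertex maps to a branch point with free branches — is the delicate part of the argument; granting it, everything else is the routine passage to the inverse limit described above.
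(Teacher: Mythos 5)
The paper does not actually prove Theorem~\ref{T:universal}: it is presented as one of the three theorems of Wa{\.z}ewski's thesis and cited to \cite{Wazewski1923}, so any self-contained argument is necessarily a different route from the paper's, and yours is essentially a correct modern reconstruction of the classical universality proof. The reduction to inverse systems is sound: $D \cong \varprojlim(A_n,\rho_n)$ is \cite{Nadler1992}*{Theorem~10.27} (which the paper itself invokes in a footnote); first-point retractions are preserved by homeomorphisms of pairs, so nested $S_n$ with extending $\psi_n$ do form an isomorphism of inverse systems; and $\varprojlim(S_n,r_n) \cong \overline{\bigcup_n S_n}$. Your facts (a) and (b) are both true and argued correctly: for (a), a point outside an arc whose first-point projection lands in the arc's interior projects to a point of order $\geq 3$; for (b), orders of points of $C$ are unchanged in $\overline{C}$, so the uniqueness statement, Theorem~\ref{T:Wazewski}, yields $\overline{C}\cong W$ (using Theorem~\ref{T:Wazewski} here is not circular, since uniqueness is independent of universality). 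The delicate step is exactly the one you flag, and it does close, with two points needing care: the prescribed set attached to a newly laid edge must include its far endpoint $d_{n+1}$, because attaching can later occur at leaves and not only at edge interiors (an endpoint of $S_n$ that is an endpoint or order-two point of $W$ cannot absorb infinitely many future attachments), so the induction hypothesis should read ``for every $m>n$ with $p_m\in A_n$, the image $\psi_n(p_m)$ is a branch point of $W$''; and the back-and-forth works because one can enlarge the prescribed countable set to a countable dense subset of the edge and apply Cantor's theorem against the countable (dendrites have countably many branch points) and, by (a), dense set of branch points interior to the target arc, then extend the order isomorphism to a homeomorphism of arcs. Since a finite tree $S_n$ occupies only finitely many of the $\aleph_0$ branches at any point of $W$, a free branch is available at every stage, so the induction never gets stuck. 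In short, your route buys a self-contained, checkable proof where the paper offers only a historical citation, at the cost of the genuine (but manageable) bookkeeping you acknowledge.
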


\begin{theorem}
\label{T:traceable}
Every dendrite is homeomorphic to the image of some continuous map
$[0,1] \to \R^2$.
\end{theorem}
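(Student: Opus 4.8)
The plan is to combine two classical facts about dendrites, namely that a dendrite is a continuous image of the unit interval and that a dendrite embeds in the plane, and then to compose a surjection from $[0,1]$ with such a planar embedding.

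First I would note that a dendrite $X$ is by definition a locally connected continuum, hence a compact, connected, locally connected, metrizable space. By the Hahn--Mazurkiewicz theorem \cite{Nadler1992}*{Theorem~8.14}, such a space is a continuous image of $[0,1]$; fix a continuous surjection $\gamma \colon [0,1] \to X$. This step uses only the properties already encoded in the definition of a dendrite, so it is automatic.

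Next I would invoke the planarity of dendrites: every dendrite embeds in $\R^2$ \cite{Nadler1992}*{Section~10.37}, giving a homeomorphism $\iota$ from $X$ onto a subspace $\iota(X) \subseteq \R^2$. The composite $\iota \circ \gamma \colon [0,1] \to \R^2$ is then continuous, and because $\gamma$ is onto, its image equals $\iota(X)$, which is homeomorphic to $X$ via $\iota$. Thus $X$ is homeomorphic to the image of a continuous map $[0,1] \to \R^2$, as claimed.

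The only substantive input is the planar embedding theorem, which is precisely what pins the target down to $\R^2$ rather than a higher-dimensional $\R^n$; Hahn--Mazurkiewicz on its own would only produce an image in an abstract space. I therefore expect the planarity statement to be the sole point of difficulty, but it is quoted outright as \cite{Nadler1992}*{Section~10.37}, so nothing further is required. One could instead realize $X$ as an inverse limit of finite trees \cite{Nadler1992}*{Theorem~10.27} and embed via Proposition~\ref{P:embedding inverse limit}, but that yields only an embedding into $\R^3$, which is too weak for the planar conclusion; hence the direct appeal to planarity is the right tool.
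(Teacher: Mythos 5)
Your proof is correct and matches the paper's own justification: the paper attributes the theorem to Wa\.zewski but sketches in a footnote exactly your argument, namely that a dendrite embeds in $\R^2$ by \cite{Nadler1992}*{Section~10.37} and is a continuous image of $[0,1]$ by the Hahn--Mazurkiewicz theorem \cite{Nadler1992}*{Theorem~8.14}, and one composes the two. Your remark that the inverse-limit route only gives $\R^3$ also mirrors the paper's parenthetical aside, so nothing is missing.
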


\begin{remark}
The key to drawing $W$ in the plane is to make sure that the
branches coming out of each branch point have diameters tending to $0$.
\end{remark}

\subsection{Pointed dendrites}

A \defi{pointed dendrite} is a pair $(X,P)$
where $X$ is a dendrite and $P \in X$.
An \defi{embedding of pointed dendrites} 
is an embedding of topological spaces mapping the point in the first
to the point in the second.
Let $\Pointed$ be the category of pointed dendrites,
in which morphisms are embeddings.
By the \defi{universal pointed dendrite},
we mean $W$ equipped with one of its branch points $w$.

\begin{theorem}
\label{T:universal pointed dendrite}
Every pointed dendrite $(X,P)$ 
admits an embedding into the universal pointed dendrite $(W,w)$.
\end{theorem}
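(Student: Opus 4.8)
The plan is to strengthen Theorem~\ref{T:universal}, which produces an embedding $X \hookrightarrow W$ but exerts no control over the image of $P$, into one that forces $P$ onto the prescribed branch point $w$. The two ingredients are the \emph{self-similarity} of $W$ and a decomposition of $X$ at $P$. For the first, I would record the following consequence of the characterization in Theorem~\ref{T:Wazewski}: if $b$ is a branch point of $W$ and $C$ is a connected component of $W - \{b\}$, then the closure $\bar C = C \cup \{b\}$ is again homeomorphic to $W$, with $b$ corresponding to an \emph{endpoint} of $\bar C$. Indeed, $\bar C$ is a subdendrite in which every branch point of $W$ lying in the open set $C$ remains a branch point of order $\aleph_0$; these are dense in $\bar C$, while $b$ has order $1$, so Theorem~\ref{T:Wazewski} applies. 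In particular, the $\aleph_0$ branches at $w$ have closures $\bar D_1, \bar D_2, \dots$, each homeomorphic to $W$ with $w$ as an endpoint, and meeting pairwise only in $\{w\}$.

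Next I would decompose $X$. Since a dendrite is separable, the components $\{C_i\}_{i \in I}$ of $X - \{P\}$ form a countable family; their closures $\bar C_i$ are dendrites with $P$ as an endpoint, $X = \bigcup_{i} \bar C_i$, and distinct closures meet only in $\{P\}$. The goal is then to embed each $(\bar C_i, P)$ into $(\bar D_i, w)$ sending the endpoint $P$ to the endpoint $w$, with the additional feature that the image diameters tend to $0$. The diameter control is obtained inside $\bar D_i \cong W$: choosing branch points $q_{i,k} \to w$ along the arc approaching the endpoint $w$, the closure of the component of $\bar D_i - \{q_{i,k}\}$ containing $w$ is again a copy of $W$ with $w$ as an endpoint, and its diameter tends to $0$ as $k \to \infty$. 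Thus everything reduces to the \emph{endpoint analogue} of the theorem: a dendrite $D$ with a marked endpoint $e$ embeds into a copy of $W$ so that $e$ maps to a prescribed endpoint.

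I would prove this endpoint sub-lemma by an enlargement argument. Attach to $D$, at a countable dense subset of $D$ not containing $e$, and then recursively along the newly attached arcs, countably many arcs of rapidly shrinking diameter, producing a dendrite $D^*$ whose branch points are dense and each of order $\aleph_0$, while $e$ remains an endpoint. The diameter condition (the same one used to draw $W$ in the plane) guarantees that $D^*$ is still a locally connected continuum containing no simple closed curve. By Theorem~\ref{T:Wazewski}, $D^* \cong W$, and $D \subseteq D^*$; a homeomorphism $D^* \to W$ carries $e$ to some endpoint, and composing with a self-homeomorphism of $W$ sending that endpoint to the prescribed one (the homogeneity of $W$ on endpoints, itself a consequence of the characterization) and restricting to $D$ yields the claim.

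Finally I would assemble the pieces: gluing the embeddings $\bar C_i \hookrightarrow \bar D_i$ along $P \mapsto w$ defines a map $f \colon X \to W$. It is injective because distinct $\bar D_i$ meet only in $\{w\}$ and each piece is embedded; it is continuous away from $P$ as a locally finite union of embeddings, and continuous at $P$ precisely because the image diameters tend to $0$, so that the preimage of each neighborhood of $w$ contains a neighborhood of $P$. Since $X$ is compact and $W$ is Hausdorff, this continuous injection is a homeomorphism onto its image, giving the desired pointed embedding $(X,P) \hookrightarrow (W,w)$. I expect the main obstacle to be the endpoint sub-lemma — specifically, realizing an arbitrary dendrite as a subdendrite of a copy of $W$ while keeping a chosen endpoint fixed as an endpoint, together with the homogeneity of $W$ on endpoints; the self-similarity identification and the diameter bookkeeping needed for continuity at an infinite-order $P$ are routine given Theorem~\ref{T:Wazewski}. (An alternative route, closer in spirit to the inverse-limit methods of the earlier sections, would be a back-and-forth construction exhibiting $(W,w)$ as a limit of finite pointed trees and embedding $(X,P)$ as such a limit.)
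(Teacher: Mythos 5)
Your decomposition-and-gluing architecture is mostly sound: the self-similarity claim (closures of components of $W-\{b\}$ are copies of $W$ with $b$ as an endpoint), the countable decomposition of $X$ at $P$, the shrinking sub-copies near $w$ (their diameters do tend to $0$, because they are nested subcontinua whose intersection is $\{w\}$ --- this uses that $w$ is an endpoint of $\bar D_i$), and the continuity/injectivity of the glued map are all correct modulo routine care (e.g.\ in the enlargement one should attach $\aleph_0$ arcs at each chosen point and include the original branch points of $D$ among the attachment points). The genuine gap is your claim that ``the homogeneity of $W$ on endpoints'' is ``a consequence of the characterization.'' It is not. Theorem~\ref{T:Wazewski} is an \emph{unpointed} uniqueness statement: applied to your $D^*$ it produces a homeomorphism $D^*\to W$ with no control whatsoever over which endpoint of $W$ receives $e$. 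To move that image endpoint to the prescribed one, you need transitivity of the homeomorphism group of $W$ on endpoints, i.e.\ a \emph{pointed} uniqueness theorem, and this cannot be extracted formally from the unpointed statement (tricks such as wedging copies of $W$ along endpoints and reapplying uniqueness again yield only uncontrolled homeomorphisms). Endpoint transitivity is in fact true, but proving it requires a back-and-forth argument relative to a marked point, or a citation; it is input of exactly the same nature as --- and less standard than --- the branch-point transitivity the paper imports from \cite{Charatonik1991}*{Proposition~4.7}. Since your whole proof funnels through this step, the main content of the theorem is left unproved.

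The paper sidesteps the issue by moving in the opposite direction: instead of decomposing $X$ so that the marked point becomes an \emph{endpoint} of each piece, it \emph{enlarges} $X$ by attaching segments at $P$ so that $P$ becomes a \emph{branch point}, embeds the enlarged dendrite into $W$ by Theorem~\ref{T:universal}, observes that a branch point of a subdendrite of $W$ is automatically a branch point of $W$, and then invokes the cited Charatonik result to carry that branch point to $w$ by a self-homeomorphism. The key observation you missed is that embeddings need not preserve order, so raising the order of $P$ costs nothing. Your reduction to endpoints cannot be patched the same way: inside each branch $\bar D_i$ the target point $w$ is an endpoint of $\bar D_i$, so no self-homeomorphism of $\bar D_i$ can bring a branch point to it. The fix is to abandon the decomposition and apply the order-raising trick to $(X,P)$ itself --- which is precisely the paper's four-line proof.
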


\begin{proof}
Enlarge $X$ by attaching a segment at $P$ in order to assume that 
$P$ is a branch point of $X$.
Theorem~\ref{T:universal} yields an embedding $i \colon X \injects W$.
Then $i(P)$ is a branch point of $W$.
By \cite{Charatonik1991}*{Proposition~4.7},
there is a homeomorphism $j \colon W \to W$ mapping $i(P)$ to $w$.
Then $j \circ i$ is an embedding $(X,P) \to (W,w)$.
\end{proof}

\begin{proposition}
\label{P:contractibility of dendrites}
Any dendrite admits a strong deformation retraction 
onto any of its points.
\end{proposition}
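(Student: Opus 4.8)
The plan is to fix the base point $P$ and contract $X$ radially toward $P$ along the unique arcs joining $P$ to the other points. Recall that a dendrite is uniquely arcwise connected, so for each $x \in X$ there is a unique arc $[P,x]$ from $P$ to $x$ (degenerate when $x = P$). First I would equip $X$ with a \emph{convex} metric $d$, that is, one for which every pair of points has a midpoint; such a metric exists because $X$, being a locally connected continuum, is a Peano continuum and hence admits a convex metric by a theorem of Bing. Since $(X,d)$ is then a complete convex metric space, any two of its points are joined by a geodesic, and uniqueness of arcs forces this geodesic to be exactly $[P,x]$. In particular each arc $[P,x]$ is isometric to the interval $[0,\rho(x)]$, where $\rho(x) \colonequals d(P,x)$, a continuous function of $x$.

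Using this, I would define the contraction explicitly. For $x \in X$ and $0 \le s \le \rho(x)$, let $\gamma(x,s)$ denote the unique point of $[P,x]$ at distance $s$ from $P$, and set
\[
  H(x,t) \colonequals \gamma\bigl(x,\,(1-t)\rho(x)\bigr) \qquad (x \in X,\ t \in [0,1]).
\]
Then $H(x,0) = x$, $H(x,1) = \gamma(x,0) = P$, and $H(P,t) = P$ for all $t$ because $\rho(P) = 0$; so, once continuity is known, $H$ is a strong deformation retraction of $X$ onto $\{P\}$.

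The main obstacle is the joint continuity of $H$, which reduces to continuity of the radial-projection map $\gamma$ on $\{(x,s) : 0 \le s \le \rho(x)\}$. Here I would exploit the tree structure of $X$: for $x,y \in X$ the arcs $[P,x]$ and $[P,y]$ share a common initial segment $[P, x \wedge y]$, where $x \wedge y$ is the last point common to both, and the assignment $(x,y) \mapsto x \wedge y$ is continuous on a dendrite. Consequently, when $y$ is close to $x$ the meet $x \wedge y$ is close to $x$, so the two geodesics agree up to a parameter near $\rho(x)$, and the convexity of $d$ bounds $d\bigl(\gamma(x,s),\gamma(y,s)\bigr)$; combined with the continuity of $\rho$, this yields continuity of $\gamma$ and hence of $H$. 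The underlying dendrite facts (unique arcs, and continuity of the meet operation) are standard and can be cited from \cite{Nadler1992}; the only point genuinely requiring care is assembling them into a uniform continuity estimate for $\gamma$ near the endpoints $s = 0$ and $s = \rho(x)$, where small perturbations of $x$ can alter which branch the arc follows.
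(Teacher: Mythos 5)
Your proposal is correct, but it is a genuinely different argument from the paper's: the paper gives no construction at all, simply citing Illanes's theorem that a dendrite admits a strong deformation retraction onto any of its subcontinua, whereas you build the homotopy explicitly. Your route --- convexify the metric by the Bing--Moise theorem, observe that a uniquely arcwise connected compact geodesic space is an $\R$-tree whose geodesics are exactly the arcs, and then flow each point radially toward $P$ --- is sound, and it trades one black box (Illanes) for another (Bing--Moise), while buying an explicit, metrically controlled homotopy. One remark on the step you flag as delicate: no special care at the endpoints is actually needed, because the tree geometry gives a clean global estimate. If $m$ is the meet of $x$ and $y$ over $P$ and $r \colonequals \rho(m)$, then for $s \le \min(\rho(x),\rho(y))$ one has $d(\gamma(x,s),\gamma(y,s)) = 0$ when $s \le r$ and $d(\gamma(x,s),\gamma(y,s)) = 2(s-r) \le (\rho(x)-r)+(\rho(y)-r) = d(x,y)$ when $s > r$; combining this with $d(\gamma(x,s),\gamma(x,s')) = |s-s'|$ yields
\[
  d\bigl(H(x,t),H(y,t')\bigr) \;\le\; d(x,y) + \bigl|(1-t)\rho(x) - (1-t')\rho(y)\bigr|,
\]
so $H$ is jointly continuous (indeed Lipschitz in $x$ for fixed $t$) with no branch-switching issues, and continuity of the meet map need not be invoked as a separate ingredient. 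Note also that your method recovers the full strength of the cited result: replacing $\rho$ by the distance to a subcontinuum $A$ and $\gamma$ by the nearest-point projection onto $A$ (which exists and is unique in an $\R$-tree) gives the strong deformation retraction onto any subcontinuum, which is exactly Illanes's statement.
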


\begin{proof}
In fact, a dendrite admits a strong deformation retraction
onto any subcontinuum~\cite{Illanes1996}.
\end{proof}

%****************************************************************************
\section{Local dendrites}\label{S:local dendrites}

\subsection{Definition and basic properties}

A \defi{local dendrite} is a continuum such that every point has 
a neighborhood that is a dendrite.
Equivalently, a continuum is a local dendrite 
if and only if it is locally connected
and contains at most a finite number of 
simple closed curves~\cite{Kuratowski1968}*{\S51, VII, Theorem~4(i)}.
Local dendrites are generalizations of finite connected graphs,
just as dendrites are generalizations of finite trees.

\begin{proposition}
\label{P:properties of local dendrites}
\hfill
\begin{enumerate}[\upshape (a)]
\item \label{I:subcontinuum of local dendrite}
Every subcontinuum of a local dendrite is a local dendrite.
\item \label{I:arcwise connected local dendrite}
An open subset of a local dendrite is arcwise connected if and only if
it is connected.
\item \label{I:simply connected local dendrite}
A connected open subset $U$ of a local dendrite 
is simply connected if and only if
it contains no simple closed curve.
\item \label{I:dendrite vs local dendrite}
A dendrite is the same thing as a simply connected local dendrite.
\end{enumerate}
\end{proposition}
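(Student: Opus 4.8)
The plan is to deduce all four parts from two inputs: the characterization, quoted just above the statement, that a continuum is a local dendrite exactly when it is locally connected and contains only finitely many simple closed curves \cite{Kuratowski1968}*{\S51, VII, Theorem~4(i)}, and the contractibility of dendrites (Proposition~\ref{P:contractibility of dendrites}). I would prove the parts in the order (a), (b), (c), (d), since (c) uses (b) and (d) uses (c). For part (a), let $C$ be a subcontinuum of a local dendrite $X$. Every simple closed curve in $C$ is one in $X$, so $C$ has at most as many of them as $X$, hence finitely many; it remains to see that $C$ is locally connected. This is the one place where the tree-like nature of $X$ is indispensable, since a subcontinuum of a general Peano continuum need not be locally connected (witness the closed topologist's sine curve inside a disk), so no soft general-position argument suffices. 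I would therefore invoke the classical fact that local dendrites are hereditarily locally connected, which propagates from the corresponding statement for dendrites---every subcontinuum of a dendrite is again a dendrite \cite{Nadler1992}---by means of the dendrite neighborhoods furnished by the definition. Granting this, $C$ is a locally connected continuum with finitely many simple closed curves, hence a local dendrite.

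For part (b), arcwise connectedness trivially implies connectedness, so only the converse is at issue. A local dendrite is a Peano continuum and is thus locally arcwise connected, a property inherited by open subsets; so a connected open subset is connected and locally arcwise connected, hence arcwise connected by the standard implication that a connected, locally arcwise connected space is arcwise connected.

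For part (c), fix a connected open $U \subseteq X$; by part (b) it is arcwise connected, so simple connectivity amounts to the vanishing of $\pi_1(U)$. The organizing idea is that $U$ strong-deformation-retracts onto a finite connected graph $\Gamma \subseteq U$ (its core): outside a finite subgraph carrying all the simple closed curves of $U$, the space consists of dendritic pieces, each collapsible by Proposition~\ref{P:contractibility of dendrites}. Granting such a retraction, $\pi_1(U) = \pi_1(\Gamma)$ is free, and $U$ is simply connected $\iff \Gamma$ is a tree $\iff \Gamma$ contains no simple closed curve $\iff U$ contains no simple closed curve; the last equivalence holds because $\Gamma \subseteq U$ in one direction, while in the other a tree $\Gamma$ forces $U$ to be contractible and hence free of simple closed curves. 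I expect the construction of the core $\Gamma$---equivalently, the direct verification that an embedded circle in the one-dimensional space $U$ is never null-homotopic---to be the main obstacle. The latter can alternatively be seen via \v{C}ech cohomology, since a simple closed curve $S$ in a local dendrite has a neighborhood retracting onto it (collapse the dendrites hanging off $S$), which forces the restriction $\check{H}^{1}(U) \to \check{H}^{1}(S) \cong \Z$ to be nonzero and so obstructs a null-homotopy.

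Finally, part (d) is immediate from the preceding. A dendrite is a locally connected continuum with no simple closed curve, hence a local dendrite by the characterization, and it is simply connected because it is contractible (Proposition~\ref{P:contractibility of dendrites}). Conversely, a simply connected local dendrite $X$, viewed as a connected open subset of itself, contains no simple closed curve by part (c); being also a locally connected continuum, it is a dendrite.
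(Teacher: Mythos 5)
Parts (a), (b), and (d) of your proposal are fine: (a) and (b) invoke classical facts at essentially the same level of detail as the paper (which cites Kuratowski and Whyburn respectively), and (d) is deduced from (c) exactly as the paper does. The genuine gap is in part (c). Your entire argument there hangs on the claim that a connected open subset $U$ of a local dendrite admits a strong deformation retraction onto a finite connected graph $\Gamma \subseteq U$ carrying all of its simple closed curves --- a claim you yourself flag as ``the main obstacle'' but never establish. It is not a routine collapse: $U$ is open, hence in general non-compact; $U - \Gamma$ can have infinitely many components; and the component-wise contractions must be assembled into one continuous homotopy. (The paper proves statements of this kind only for \emph{compact} local dendrites, in Proposition~\ref{P:G-dendrite}, and even there by appealing to Berkovich's Proposition~4.1.6.) Worse, when $U$ contains no simple closed curve your $\Gamma$ degenerates to a point, so the claim becomes ``$U$ is contractible,'' which is essentially the implication you are trying to prove. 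Separately, your justification of the last equivalence --- ``a tree $\Gamma$ forces $U$ to be contractible and hence free of simple closed curves'' --- rests on an invalid inference: a contractible space can perfectly well contain simple closed curves (a disk does). That contractibility excludes embedded circles in this one-dimensional setting is precisely the nontrivial point; the paper obtains it from \cite{Borsuk-Jaworowski1952}*{Theorem on p.~174}.

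For contrast, the paper's proof of (c) needs no retraction at all. If $U$ contains a simple closed curve $\gamma$, it cites \cite{Borsuk-Jaworowski1952}*{Theorem on p.~174}: any deformation of the inclusion of $\gamma$ is a closed path whose image contains $\gamma$, so $\gamma$ is not null-homotopic. If $U$ contains no simple closed curve, take any loop $S^1 \to U$: its image is a locally connected continuum (a continuous Hausdorff image of a Peano continuum is a Peano continuum) containing no simple closed curve, hence a dendrite, hence contractible by Proposition~\ref{P:contractibility of dendrites}; the loop is therefore null-homotopic already inside its own image. Replacing your retraction argument with this two-line argument would repair the proof. Your alternative \v{C}ech-cohomological sketch addresses only the first direction and has gaps of its own: simple connectivity does not imply $\HHcech^1 = 0$ without local hypotheses (witness the Warsaw circle, so local contractibility of $U$ would have to be established first), and the ``neighborhood retracting onto $S$'' is again an unproven collapse claim of the same nature as the one it was meant to replace.
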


\begin{proof}\hfill
  \begin{enumerate}[\upshape (a)]
  \item
This follows from the fact that every subcontinuum of a dendrite is a 
dendrite~\cite{Kuratowski1968}*{\S51, VI, Theorem~4}.
  \item
This follows from \cite{Whyburn1971}*{II, (5.3)}.
  \item
If $U$ contains a simple closed curve $\gamma$, 
\cite{Borsuk-Jaworowski1952}*{Theorem on p.~174}
shows that $\gamma$ cannot be deformed to a point,
so $U$ is not simply connected.
If $U$ does not contain a simple closed curve,
then the image of any simple closed curve in $U$ is a dendrite, 
and hence by Proposition~\ref{P:contractibility of dendrites} 
is contractible,
so $U$ is simply connected.
\item 
This follows from \eqref{I:simply connected local dendrite}.\qedhere
  \end{enumerate}
\end{proof}

\subsection{Local dendrites and quasi-polyhedra}

We now relate the notion of quasi-polyhedron 
in \cite{Berkovich1990}*{\S4.1}
to the notion of local dendrite.

\begin{proposition}
\label{P:compact metrizable quasi-polyhedron}
\hfill
\begin{enumerate}[\upshape (a)]
\item \label{I:open subset of local dendrite is quasi-polyhedron}
A connected open subset of a local dendrite
is a quasi-polyhedron.
\item \label{I:quasi-polyhedron is local dendrite}
A compact metrizable quasi-polyhedron is the same thing
as a local dendrite.
\item \label{I:quasi-polyhedron is dendrite}
 A compact metrizable simply connected quasi-polyhedron is 
the same thing as a dendrite.
\item A compact metrizable quasi-polyhedron is special
in the sense of \cite{Berkovich1990}*{Definition~4.1.5}.
\end{enumerate}
\end{proposition}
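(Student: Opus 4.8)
The plan is to argue directly from the definition of quasi-polyhedron in \cite{Berkovich1990}*{\S4.1}, taking the four parts in order, since each of \eqref{I:quasi-polyhedron is local dendrite}--(d) reduces to \eqref{I:open subset of local dendrite is quasi-polyhedron} together with the topology of local dendrites recorded in Proposition~\ref{P:properties of local dendrites}. Recall that the conditions defining a quasi-polyhedron are essentially local: the space is locally connected, arcwise connected, and each point has a fundamental system of connected open neighborhoods that are uniquely arcwise connected and have finite topological boundary. For \eqref{I:open subset of local dendrite is quasi-polyhedron}, let $U$ be a connected open subset of a local dendrite $Y$. Local connectedness, together with the Hausdorff and local compactness properties, is inherited from $Y$, and arcwise connectedness of $U$ is exactly Proposition~\ref{P:properties of local dendrites}\eqref{I:arcwise connected local dendrite}. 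The remaining conditions are local, so I would verify them inside a dendrite neighborhood of each point of $U$: a dendrite is uniquely arcwise connected, and, since the components of the complement of a point in a locally connected continuum form a null family (at most finitely many of diameter exceeding a fixed bound), each point has arbitrarily small connected neighborhoods with finite boundary. These properties pass to open subsets, so $U$ is a quasi-polyhedron.

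For \eqref{I:quasi-polyhedron is local dendrite}, the implication that a local dendrite is a compact metrizable quasi-polyhedron is immediate from \eqref{I:open subset of local dendrite is quasi-polyhedron}, since a local dendrite is a connected open subset of itself and is by definition a compact metrizable continuum. Conversely, let $X$ be a compact metrizable quasi-polyhedron. It is a continuum, being arcwise connected, and it is locally connected; by the definition of local dendrite in Section~\ref{S:dendrites} it suffices to exhibit a dendrite neighborhood of each point. I would take a small neighborhood basis element $V$ at a point---connected, open, uniquely arcwise connected, with finite boundary---and show that its closure $\overline{V}$, a subcontinuum of the compact space $X$, is locally connected and uniquely arcwise connected, hence contains no simple closed curve and is therefore a dendrite. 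This exhibits $X$ as a local dendrite.

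Part \eqref{I:quasi-polyhedron is dendrite} is then formal: by \eqref{I:quasi-polyhedron is local dendrite} a compact metrizable quasi-polyhedron is the same as a local dendrite, and by Proposition~\ref{P:properties of local dendrites}\eqref{I:dendrite vs local dendrite} a simply connected local dendrite is the same as a dendrite, so imposing simple connectedness on both sides of \eqref{I:quasi-polyhedron is local dendrite} gives the claim. For part (d), I would unwind Berkovich's Definition~4.1.5 of a \emph{special} quasi-polyhedron and check its conditions against the concrete description from \eqref{I:quasi-polyhedron is local dendrite}: compactness bounds the relevant global data and, through the finite-boundary neighborhoods produced in \eqref{I:open subset of local dendrite is quasi-polyhedron}, supplies the required local finiteness, while metrizability provides the needed countability. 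This step is essentially bookkeeping once the identification with local dendrites is in hand.

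The main obstacle I anticipate is the forward direction of \eqref{I:quasi-polyhedron is local dendrite}: extracting from the quasi-polyhedron axioms a genuinely compact (closed) neighborhood and verifying that it is locally connected and free of simple closed curves, so that the abstract local condition is recognized as the concrete dendrite neighborhood demanded by the definition of local dendrite. The delicate point is that passing to the closure of a small uniquely arcwise connected neighborhood must not introduce a cycle through the finitely many boundary points; once this is settled, parts (c) and (d) follow with little further work.
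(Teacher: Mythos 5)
Your parts (a), (c), and the easy direction of (b) run along the same lines as the paper, but the converse direction of (b) --- a compact metrizable quasi-polyhedron is a local dendrite --- contains a genuine gap, and it is exactly the point you flag as ``delicate'' and then leave unsettled. Your plan is to take a small basis neighborhood $V$ (connected, open, uniquely arcwise connected, finite boundary) and show that $\overline{V}$ is a dendrite. But ``uniquely arcwise connected with finite boundary'' does not pass to closures: in the circle $S^1$ (which is itself a compact metrizable quasi-polyhedron, hence inside the scope of the statement), $V = S^1 \setminus \{p\}$ is open, connected, uniquely arcwise connected, and has a single boundary point, yet $\overline{V} = S^1$ is a simple closed curve. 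So an additional argument is required to rule out cycles appearing through the finitely many boundary points (one could try to use local compactness to force $\overline{V}$ inside a cycle-free basis neighborhood $W$), and even then you would still owe a proof that $\overline{V}$ is \emph{locally connected}, which is not formal for closures of open sets. The paper avoids both problems by arguing globally rather than locally: condition $(a_2)$ of \cite{Berkovich1990}*{Definition~4.1.1} covers $X$ by open sets containing no simple closed curve; compactness (a Lebesgue-number argument) then gives a positive lower bound $\epsilon$ on the diameter of every simple closed curve in $X$; and \cite{Kuratowski1968}*{\S51, VII, Lemma~3} states precisely that a locally connected continuum with this property is a local dendrite. That lemma is the key ingredient missing from your outline.

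Two further points. In (a), Berkovich's definition of quasi-polyhedron also requires the space to be countable at infinity; your summary of the definition omits this condition, whereas the paper verifies it (via the embedding of the local dendrite into $\R^3$ from Proposition~\ref{P:embedding local dendrites}\eqref{I:local dendrite embeds in R^3}; for an open subset of a compact metrizable space it is automatic, but it must be checked). Also, your parenthetical justification of the finite-boundary neighborhoods via null families is not a proof; the correct reference is \cite{Kuratowski1968}*{\S51, VII, Theorem~1}, which is what the paper cites. Finally, part (d) is not ``essentially bookkeeping'': one must actually produce, for each partial ordering in \cite{Berkovich1990}*{Definition~4.1.5}, a compatible monotone real-valued function $\theta$. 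The paper does this by invoking the radial distance functions of \cite{Mayer-Oversteegen1990}*{Section~4.6} (which applies because dendrites are locally arcwise connected and uniquely arcwise connected) and by reducing the local dendrite case to the dendrite case via simply connected sub-quasi-polyhedra; your proposal contains no such construction.
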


\begin{proof}
\hfill
\begin{enumerate}[\upshape (a)]
\item 
Suppose that $V$ is a connected open subset of a local dendrite~$X$.
By \cite{Kuratowski1968}*{\S51, VII, Theorem~1},
each point $v$ of $V$ has arbitrarily small open neighborhoods
$\UU$ with finite boundary.
We may assume that each $\UU$ is contained in a dendrite.
Since $V$ is locally connected, we may replace each $\UU$
by its connected component containing $x$: this can only
shrink its boundary.
Now each $\UU$, as a connected subset of a dendrite, is 
uniquely arcwise connected~\cite{Whyburn1971}*{p.~89, 1.3(ii)}.
So these $\UU$ satisfy~\cite{Berkovich1990}*{Definition~4.1.1(i)(a)}.

By Proposition~\ref{P:embedding local dendrites}\eqref{I:local dendrite embeds in R^3} (whose proof does not use anything from here on!),
$X$ is homeomorphic to a compact subset of $\R^3$,
so every open subset of $X$ is countable at infinity
(i.e., a countable union of compact sets).
Thus $V$ is a quasi-polyhedron.

\item 
If $X$ is a local dendrite,
it is a quasi-polyhedron 
by~\eqref{I:open subset of local dendrite is quasi-polyhedron}
and compact and metrizable by definition.

Conversely, suppose that $X$ is a compact metrizable quasi-polyhedron.
In particular, $X$ is a continuum.
Condition~($a_2$) in \cite{Berkovich1990}*{Definition~4.1.1}
implies that $X$ is locally connected
and covered by open subsets containing no simple closed curve.
By compactness, this implies that there is a positive lower bound $\epsilon$
on the diameter of simple closed curves in $X$.
By \cite{Kuratowski1968}*{\S51, VII, Lemma~3}, 
this implies that $X$ is a local dendrite.

\item Combine \eqref{I:quasi-polyhedron is local dendrite}
and Proposition~\ref{P:properties of local dendrites}\eqref{I:dendrite vs local dendrite}.

\item 
A dendrite is special since each partial ordering 
as in \cite{Berkovich1990}*{Definition~4.1.5}
arises from some $x \in X$, and we can take $\theta$ there
to be a radial distance function 
as in \cite{Mayer-Oversteegen1990}*{Section~4.6},
which applies since dendrites are locally arcwise connected
and uniquely arcwise connected.
A local dendrite is special since any simply connected sub-quasi-polyhedron
is homeomorphic to a connected open subset of a dendrite.
\qedhere
\end{enumerate}
\end{proof}

\subsection{The core skeleton}

By \cite{Berkovich1990}*{Proposition~4.1.3(i)},
any simply connected quasi-polyhedron $Q$
has a unique compactification $\widehat{Q}$
that is a simply connected quasi-polyhedron.
The points of $\widehat{Q}-Q$ are called the \defi{endpoints} of $Q$.
Given a quasi-polyhedron $X$, 
Berkovich defines its \defi{skeleton} $\Delta(X)$ 
as the complement in $X$ of the set of points
having a simply connected quasi-polyhedral open neighborhood
with a single endpoint~\cite{Berkovich1990}*{p.~76}.
In the case of a local dendrite,
we can characterize this subset in many ways: 
see Proposition~\ref{P:core skeleton}.

\begin{lemma}
\label{L:connected component}
Let $X$ be a local dendrite.
Let $G$ be a subcontinuum of $X$ containing all the simple closed curves.
Let $C$ be a connected component of $X-G$.
Then $C$ is open in $X$
and is a simply connected quasi-polyhedron with one endpoint,
and its closure $\overline{C}$ in $X$ is a dendrite
intersecting $G$ in a single point.
\end{lemma}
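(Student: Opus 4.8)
The plan is to establish each assertion in turn, using that $X$ is a local dendrite and $G$ is a subcontinuum capturing every simple closed curve. First I would prove that $C$ is open in $X$. Since $X$ is locally connected (being a local dendrite) and $G$ is closed (as a compact subset of a metric space), $X - G$ is open, and in a locally connected space the connected components of an open set are open. Hence $C$ is open in $X$, and in particular $C$ is itself a connected open subset of a local dendrite, so by Proposition~\ref{P:compact metrizable quasi-polyhedron}\eqref{I:open subset of local dendrite is quasi-polyhedron} it is a quasi-polyhedron.

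Next I would show $C$ contains no simple closed curve: any simple closed curve of $X$ lies in $G$ by hypothesis, hence is disjoint from $C \subseteq X - G$. Combined with the previous step and Proposition~\ref{P:compact metrizable quasi-polyhedron}\eqref{I:quasi-polyhedron is dendrite} applied suitably (or Proposition~\ref{P:properties of local dendrites}\eqref{I:simply connected local dendrite}), this gives that $C$ is simply connected, hence a simply connected quasi-polyhedron. I would then turn to the closure $\overline{C}$. The key structural input is that $\overline{C}$ is a subcontinuum of $X$: it is closed and compact, and connected because the closure of a connected set is connected. By Proposition~\ref{P:properties of local dendrites}\eqref{I:subcontinuum of local dendrite} it is a local dendrite, and since it contains no simple closed curve (those all lie in $G$, and I would argue $\overline{C} \cap G$ is small), Proposition~\ref{P:properties of local dendrites}\eqref{I:dendrite vs local dendrite} makes $\overline{C}$ a dendrite.

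The crux is the claim that $\overline{C} \cap G$ is a single point, which simultaneously yields the single-endpoint assertion. Here is where the dendrite structure does the real work. Since $\overline{C}$ is a dendrite it is uniquely arcwise connected, and $C = \overline{C} - (\overline{C}\cap G)$ is one of the connected components of $X - G$. The boundary $\partial C = \overline{C} - C$ must lie in $G$. I would argue that if $\partial C$ contained two distinct points $p,q$, then the unique arc in the dendrite $\overline{C}$ joining $p$ to $q$ would either meet $G$ only at its endpoints (forcing its interior into $C$) or lie in $G$; pairing this arc with an arc through $G$ connecting $p$ to $q$ (available since $G$ is a subcontinuum, hence arcwise connected by Proposition~\ref{P:properties of local dendrites}\eqref{I:arcwise connected local dendrite} applied to a neighborhood, or directly since $G$ is a local dendrite) would produce a simple closed curve meeting $C$, contradicting that all simple closed curves lie in $G$. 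Thus $\partial C$ is a single point, which is exactly the endpoint of the quasi-polyhedron $C$ in the sense of the compactification $\widehat{C} = \overline{C}$, and $\overline{C} \cap G = \partial C$ is that single point.

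The main obstacle I anticipate is the single-point argument: carefully producing the simple closed curve from two boundary points requires controlling how the connecting arcs interact (ensuring the two arcs meet only at $p$ and $q$ so that their union is genuinely a simple closed curve, not merely a loop that backtracks). I would handle this by invoking unique arcwise connectedness in the dendrite $\overline{C}$ together with the fact that $C$ is a single connected component of $X-G$, so that any arc leaving and returning to $G$ through $C$ is forced to be disjoint from $G$ on its interior; the combinatorics of these arcs in a uniquely arcwise connected space is what makes the simple closed curve appear, and pinning down that the arcs can be chosen internally disjoint is the delicate point.
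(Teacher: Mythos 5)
There is a genuine gap, and it sits exactly at the point you flagged as delicate. First, your argument is circular: to conclude that $\overline{C}$ is a dendrite you need to know it contains no simple closed curve, which (since all such curves lie in $G$) amounts to knowing that $\overline{C} \cap G$ is small; but your proof that $\overline{C} \cap G$ is a single point invokes unique arcwise connectedness of $\overline{C}$, i.e., the dendrite property itself. You acknowledge this (``I would argue $\overline{C} \cap G$ is small'') but never break the circle. The paper's proof runs in the opposite order: it first proves $\#(\overline{C}\cap G)=1$ using nothing about $\overline{C}$ beyond the inclusion $\overline{C} \subseteq C \cup G$, and only afterwards does the dendrite property of $\overline{C}$ follow (a subcontinuum of a local dendrite containing no simple closed curve, via Proposition~\ref{P:properties of local dendrites}).

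Second, and independently of the circularity, your dichotomy is both false as stated and incomplete where it matters. The arc $A$ from $p$ to $q$ in $\overline{C}$ can meet $G$ in an arbitrary closed subset, not just ``endpoints only or all of $A$''; that part is repairable (take a connected component of $A-G$: since $A - G \subseteq \overline{C}-G = C$, its closure is a subarc with interior in $C$ meeting $G$ exactly in its two distinct endpoints, which serve in place of $p,q$). The irreparable part is the case $A \subseteq G$: splicing $A$ with another arc in $G$ then produces a simple closed curve \emph{inside} $G$, which contradicts nothing, and your sketch offers no way to exclude this case. Excluding it is the real content of the lemma: given the rest of your argument it amounts to showing that $\overline{C}-C$ is totally disconnected, and this is precisely what the paper imports from Berkovich's theory. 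In the paper, $C$ is a simply connected quasi-polyhedron, so by \cite{Berkovich1990}*{Proposition~4.1.3(i)} its canonical compactification $\widehat{C}$ has \emph{totally disconnected} remainder $\widehat{C}-C$; hence an arc in $\widehat{C}$ joining two endpoints is forced to pass through a point $c\in C$, and its image in $X$, spliced with an arc in $G$, contains a simple closed curve through $c \notin G$ --- the desired contradiction --- after which $1 \le \#(\overline{C}\cap G) \le \#(\widehat{C}-C) \le 1$. Your closing identification $\widehat{C}=\overline{C}$ cannot substitute for this, since it presupposes that $\overline{C}$ is already known to be a dendrite compactification of $C$, which is again the circularity. (A smaller omission: you never show $\overline{C}\cap G \ne \emptyset$; the paper gets this from connectedness of $X$ together with the observation that $C \cup G$ is closed.)
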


\begin{proof}
Since $X$ is locally connected, $X-G$ is locally connected,
so $C$ is open.
By Proposition~\ref{P:compact metrizable quasi-polyhedron}\eqref{I:open subset of local dendrite is quasi-polyhedron},
$C$ is a quasi-polyhedron.
Since $C$ contains no simple closed curve,
it is simply connected by 
Proposition~\ref{P:properties of local dendrites}\eqref{I:simply connected local dendrite}.

The complement of $C \union G$ is a union of connected components of $X-G$,
so $C \union G$ is closed, so it contains $\overline{C}$.
Since $X$ is connected, $\overline{C} \ne C$,
so $\#(\overline{C} \intersect G) \ge 1$.

If $C$ had more than one endpoint, 
there would be an arc $\alpha$ in $\widehat{C}$ connecting two of them,
passing through some $c \in C$ since $\widehat{C}-C$ is totally disconnected 
by \cite{Berkovich1990}*{Proposition~4.1.3(i)};
the image of $\alpha$ under the induced map $\widehat{C} \to X$
together with an arc in $G$ connecting the images of the two endpoints
would contain a simple closed curve passing through $c$,
contradicting the hypothesis on $G$.
Also, each point in $\overline{C} \intersect G$ 
is the image of a point in $\widehat{C}-C$.
Now $1 \le \#(\overline{C} \intersect G) \le \#(\widehat{C}-C) \le 1$,
so equality holds everywhere.
\end{proof}

\begin{proposition}
\label{P:core skeleton}
Let $X$ be a local dendrite.
Each of the following conditions defines the same closed
subset $\Delta$ of $X$.
\begin{enumerate}[\upshape (i)]
\item\label{I:smallest subcontinuum} 
If $X$ is a dendrite, $\Delta=\emptyset$;
otherwise $\Delta$ is the smallest subcontinuum of $X$ containing 
all the simple closed curves.
\item 
\label{I:union of arcs}
The set $\Delta$ is the union of all arcs each endpoint of which
belongs to a simple closed curve.
\item\label{I:Berkovich skeleton} 
The set $\Delta$ is the skeleton $\Delta(X)$ 
defined in \cite{Berkovich1990}*{p.~76}.
\end{enumerate}
\end{proposition}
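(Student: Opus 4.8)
The plan is to prove the three descriptions coincide by showing that each defines the same set, working with the subcontinuum characterization~\eqref{I:smallest subcontinuum} as the central object and relating the other two to it. First I would treat the dendrite case separately: when $X$ is a dendrite it contains no simple closed curve, so~\eqref{I:smallest subcontinuum} gives $\emptyset$; in~\eqref{I:union of arcs} there are no arcs with endpoints on a simple closed curve, so that set is empty too; and for~\eqref{I:Berkovich skeleton}, a dendrite is a simply connected quasi-polyhedron in which every point has a simply connected neighborhood with one endpoint (indeed the whole space works locally), so Berkovich's skeleton is empty. Thus all three agree on $\emptyset$, and henceforth I would assume $X$ is not a dendrite, so that simple closed curves exist.

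Next I would establish that the smallest subcontinuum $G$ containing all simple closed curves in~\eqref{I:smallest subcontinuum} is well defined: since $X$ is a local dendrite it has only finitely many simple closed curves, and the union of these finitely many continua together with arcs joining them (which exist by arcwise connectedness, Proposition~\ref{P:properties of local dendrites}\eqref{I:arcwise connected local dendrite}) is a subcontinuum; intersecting all subcontinua containing every simple closed curve, and using that such an intersection of subcontinua in a local dendrite is again a subcontinuum, yields a unique smallest one, which is closed. For the equality of~\eqref{I:smallest subcontinuum} and~\eqref{I:union of arcs}, I would argue both inclusions. The union of arcs with endpoints on simple closed curves is connected (all such arcs meet the closed curves, which are connected among themselves through $G$) and compact, and contains every simple closed curve, so it contains $\Delta$; conversely any such arc lies in $G$ because removing it would leave a proper subcontinuum still containing all the curves unless the arc is forced to lie inside the minimal one — here the uniquely arcwise connected structure of local dendrites on the complement of the curves is the key mechanism, essentially an application of Lemma~\ref{L:connected component}.

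For the identification with Berkovich's skeleton~\eqref{I:Berkovich skeleton}, I would use Lemma~\ref{L:connected component} directly with $G=\Delta$: every connected component $C$ of $X-\Delta$ is a simply connected quasi-polyhedron with exactly one endpoint, so every point of $X-\Delta$ has a simply connected quasi-polyhedral neighborhood with a single endpoint, placing it outside $\Delta(X)$. Conversely, I would show that no point of $\Delta$ admits such a neighborhood: a point on or near a simple closed curve cannot have a simply connected neighborhood with one endpoint, because a simply connected quasi-polyhedron contains no simple closed curve by Proposition~\ref{P:properties of local dendrites}\eqref{I:simply connected local dendrite}, and the minimality of $\Delta$ forces any neighborhood of a $\Delta$-point either to meet a simple closed curve or to be a genuine branch locus with more than one endpoint. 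This gives $X-\Delta(X)=X-\Delta$, hence $\Delta(X)=\Delta$.

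The main obstacle I anticipate is the reverse inclusion in the Berkovich-skeleton identification, namely ruling out single-endpoint simply connected neighborhoods for points of $\Delta$. The subtlety is that $\Delta$ contains not only the simple closed curves but also the connecting arcs, and a point interior to a connecting arc might superficially look like it has a one-endpoint neighborhood; the resolution is that such a neighborhood, being simply connected, would cut $\Delta$ into a piece that is still a subcontinuum containing all the closed curves but properly smaller, contradicting minimality. Making this local-to-global minimality argument precise—translating the single-endpoint condition into a statement about separating $\Delta$—is where the real work lies, and I would lean on the endpoint count in Lemma~\ref{L:connected component} to convert it into a clean contradiction.
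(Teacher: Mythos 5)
Your outline follows the paper's skeleton (dendrite case first, identify \eqref{I:smallest subcontinuum} with \eqref{I:union of arcs}, then get one inclusion of \eqref{I:Berkovich skeleton} from Lemma~\ref{L:connected component} applied to $G=\Delta$), but two of your central steps rest on claims you do not prove, and one of them is false as stated. You justify the existence of the smallest subcontinuum in \eqref{I:smallest subcontinuum} by intersecting all subcontinua containing every simple closed curve, ``using that such an intersection of subcontinua in a local dendrite is again a subcontinuum.'' Local dendrites are not hereditarily unicoherent: in a circle, two closed arcs covering it are subcontinua whose intersection is a disconnected two-point set, so intersections of subcontinua of a local dendrite need not be subcontinua; and for the particular family you intersect, connectedness of the intersection is essentially equivalent to what is being proved. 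Likewise, you assert that the union in \eqref{I:union of arcs} is compact, but a union of infinitely many arcs has no a priori reason to be compact. What is missing is the combinatorial core of the paper's proof: writing $L$ for the union of the (finitely many) simple closed curves, between any two distinct components of $L$ there is \emph{at most one} arc meeting $L$ only in its endpoints (two such arcs would produce a simple closed curve not contained in $L$), and every arc with endpoints in $L$ is covered by $L$ together with these finitely many connecting arcs. This finiteness gives compactness of the set $D$ in \eqref{I:union of arcs}, and the same no-new-curve argument shows that any subcontinuum $Y\supseteq L$, being arcwise connected, contains each connecting arc $\alpha$ (the arc in $Y$ joining the endpoints of $\alpha$ must \emph{equal} $\alpha$), so $D$ is the smallest such subcontinuum. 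Your phrase ``the uniquely arcwise connected structure \dots\ is the key mechanism'' points at this, but it is the heart of the proof, not a detail one can wave at.

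For the inclusion $\Delta\subseteq\Delta(X)$ your arguments also have gaps. For a point $x$ on a simple closed curve $\gamma$, saying that a simply connected neighborhood $Q$ ``contains no simple closed curve'' only rules out $\gamma\subseteq Q$; it does not rule out $Q$ meeting $\gamma$ in a proper nonempty open subset. The paper closes this with an endpoint count: a component of $Q\cap\gamma$ would be an open interval whose two ends map to two distinct points of $\widehat{Q}-Q$, contradicting the single-endpoint hypothesis; hence $Q\supseteq\gamma$, which then contradicts simple connectedness. For points on the connecting arcs, your ``cutting'' idea (a one-endpoint neighborhood $Q$ of such a point would leave a properly smaller subcontinuum containing all curves, contradicting minimality) can be made to work, but as written it needs two facts you do not establish: that no simple closed curve meets $Q$ at all (stronger than none being contained in $Q$), and that the complement $X-Q$ (rather than your ``piece of $\Delta$,'' which again runs into the unicoherence problem) is a subcontinuum. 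The paper avoids this entirely: having shown $\Delta(X)\supseteq L$, it invokes that Berkovich's skeleton is itself a subcontinuum and applies the minimality of $D$ to conclude $\Delta(X)\supseteq D$, while the reverse inclusion follows from Lemma~\ref{L:connected component} exactly as you say.
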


\begin{proof}
Let $L$ be the union of the simple closed curves in $X$.
If $L=\emptyset$, then $X$ is a dendrite and 
\eqref{I:smallest subcontinuum}, 
\eqref{I:union of arcs}, 
\eqref{I:Berkovich skeleton}
all define the empty set.
So suppose that $L \ne \emptyset$.

For each pair of distinct components of $L$,
there is at most one arc $\alpha$ in $X$ intersecting $L$
in two points, one from each component in the pair
(otherwise there would be a simple closed curve not contained in $L$).
Let $D$ be the union of all these arcs $\alpha$ with $L$.
Any arc $\beta$ in $X$ with endpoints in $L$ must be contained in $D$,
since a point of $\beta$ outside $D$ would be contained in
some subarc $\beta'$ intersecting $L$ in just the endpoints of $\beta'$,
which would then have to be some $\alpha$.
Thus $D$ is the union of the arcs whose endpoints lie in $L$.
By Proposition~\ref{P:properties of local dendrites}\eqref{I:arcwise connected local dendrite}, $X$ is arcwise connected, so $D$ is arcwise connected.
By definition, $D$ is a finite union of compact sets,
so $D$ is a subcontinuum.

By Proposition~\ref{P:properties of local dendrites}\eqref{I:arcwise connected local dendrite},
any subcontinuum $Y \subseteq X$ is arcwise connected,
so if $Y$ contains $L$, then for each $\alpha$ as above, 
$Y$ contains an arc $\beta$ with the same endpoints as $\alpha$,
and then $\beta=\alpha$ 
(otherwise there would be subarcs of $\alpha$ and $\beta$
whose union was a simple closed curve not contained in $L$);
thus $Y \supseteq D$.
Hence $D$ is the smallest subcontinuum containing $L$.

Let $\Delta$ be the $\Delta(X)$ of~\cite{Berkovich1990}*{p.~76}.
If $x$ were a point in a simple closed curve $\gamma$ in $X$ 
with a neighborhood $Q$ as in the definition of $\Delta$,
then $Q$ must contain $\gamma$, since otherwise $Q \intersect \gamma$
would have a connected component 
homeomorphic to an open interval $I$,
and the two points of $\widehat{I}-I$
would map to two distinct points of $\widehat{Q}-Q$,
contradicting the choice of $Q$.
Thus $\Delta \supseteq L$.
But $D$ is the smallest subcontinuum containing $L$,
so $\Delta \supseteq D$.
On the other hand,
Lemma~\ref{L:connected component}
shows that the points of $X-D$ lie outside $\Delta$.
Hence $\Delta=D$.
\end{proof}

We call $\Delta$ the \defi{core skeleton} of $X$,
since in \cite{Hrushovski-Loeser-preprint}*{Section~10}
the term ``skeleton'' is used more generally 
for any finite simplicial complex 
onto which $X$ admits a strong deformation retraction.
If $\Delta \ne \emptyset$,
then $\Delta$ is a finite connected graph with no vertices 
of degree less than or equal 
to $1$~\cite{Berkovich1990}*{Proposition~4.1.4(ii)}.

\subsection{\texorpdfstring{$G$}{G}-dendrites}

\begin{proposition}
\label{P:G-dendrite}
For a subcontinuum $G$ of $X$, the following are equivalent.
\begin{enumerate}[\upshape (i)]
\item\label{I:contains core skeleton} 
$G$ contains the core skeleton of $X$.
\item\label{I:deformation retract}
$G$ is a deformation retract of $X$.
\item\label{I:strong deformation retract}
$G$ is a strong deformation retract of $X$.
\item \label{I:very strong deformation retract}
There is a retraction $r \colon X \to G$
such that 
there exists a homotopy $h \colon [0,1] \times X \to X$
between $h(0,x)=x$ and $h(1,x)=r(x)$
satisfying $r(h(t,x)) = r(x)$ for all $t$ and $x$
(i.e., ``points are moved only along the fibers of $r$'');
moreover, $r$ is unique, 
characterized by the condition that it maps each connected component $C$
of $X-G$ to the singleton $\overline{C} \intersect G$.
\end{enumerate}
\end{proposition}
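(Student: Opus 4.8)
The plan is to prove the two substantive directions (i)$\Rightarrow$(iii) and (i)$\Rightarrow$(iv) by a single construction, to observe that (iii)$\Rightarrow$(ii) and (iv)$\Rightarrow$(ii) are immediate from the definitions of (strong) deformation retract, and to close the loop with a homological argument for (ii)$\Rightarrow$(i); the uniqueness of $r$ is then read off from the construction. Throughout, the geometric input is the same: assuming (i), $G$ is a subcontinuum containing $\Delta$, hence containing every simple closed curve (as $\Delta$ is the smallest subcontinuum doing so, by Proposition~\ref{P:core skeleton}), so Lemma~\ref{L:connected component} applies. It tells us that each connected component $C$ of $X-G$ is open, that $\overline{C}$ is a dendrite meeting $G$ in a single point $p_C$, and hence that $\partial C = \overline{C}-C = \{p_C\}$.

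First I would construct, under hypothesis (i), the retraction $r\colon X\to G$ with $r|_G=\mathrm{id}_G$ and $r(\overline{C})=\{p_C\}$, together with a single homotopy proving both (iii) and (iv). On each dendrite $\overline{C}$, Proposition~\ref{P:contractibility of dendrites} furnishes a strong deformation retraction onto its point $p_C$, a self-homotopy of $\overline{C}$ fixing $p_C$; gluing these to the constant homotopy on $G$ produces a map $h\colon[0,1]\times X\to X$ with $h(0,\cdot)=\mathrm{id}$ and $h(1,\cdot)=r$ that fixes $G$ pointwise for all $t$ and moves each point only inside its own $\overline{C}$, i.e.\ only along the fibers of $r$. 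This single $h$ witnesses (iii) and (iv) simultaneously, \emph{provided it is continuous}. I expect this continuity to be the main obstacle, since there may be infinitely many components. The key is that, $X$ being a Peano continuum (a compact, connected, locally connected metric space), the components of the open set $X-G$ form a null family, i.e.\ $\mathrm{diam}\,\overline{C}\to 0$. Granting this, $h$ displaces any point of $\overline{C}$ by at most $\mathrm{diam}\,\overline{C}$, and checking continuity at a point $(t_0,p)$ with $p\in G$ reduces to the two cases of a sequence lying in one fixed $\overline{C}$ (handled by continuity of that piece, whose retraction fixes $p_C$) or spread over distinct components $\overline{C}$ with diameters shrinking to $0$.

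The trivial implications then read: a strong deformation retraction is a deformation retraction, so (iii)$\Rightarrow$(ii); and the $h$ of (iv) already exhibits $G$ as a deformation retract, so (iv)$\Rightarrow$(ii). For the uniqueness clause of (iv), suppose $(r,h)$ satisfies (iv). For $x\in C$ the path $t\mapsto h(t,x)$ joins $x\in C$ to $r(x)\in G\subseteq X-C$, so it must cross $\partial C=\{p_C\}$; at the crossing time the relation $r(h(t,x))=r(x)$ together with $r(p_C)=p_C$ forces $r(x)=p_C$. Hence every such $r$ satisfies $r(C)=\{p_C\}=\overline{C}\cap G$ and $r|_G=\mathrm{id}$, which determines $r$ uniquely and gives the stated characterization.

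Finally I would prove (ii)$\Rightarrow$(i) by contraposition using $H_1$. If $\Delta\not\subseteq G$, then since $G$ is a subcontinuum and $\Delta$ is the smallest subcontinuum containing all simple closed curves, some simple closed curve $\gamma$ has $\gamma\not\subseteq G$; note $\gamma\subseteq\Delta$. Applying the already-proven (i)$\Rightarrow$(iv) to $G=\Delta$ gives the canonical fiber-retraction $r_\Delta\colon X\to\Delta$ and shows $X$ deformation retracts onto the finite graph $\Delta$, so $H_1(X)\cong H_1(\Delta)$ and $[\gamma]\neq 0$. Choose $x_0\in\gamma-G$ that is interior to an edge $e\subseteq\gamma$ of $\Delta$ and is not an attaching point $p_C$ of any component of $X-\Delta$ (possible because the $p_C$ are countable while $\gamma-G$ is an arc). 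Then $r_\Delta^{-1}(x_0)=\{x_0\}$, so the fiber-homotopy restricts to a deformation retraction of $X-\{x_0\}$ onto $\Delta-\{x_0\}$; consequently the image of $H_1(X-\{x_0\})\to H_1(X)$ matches that of $H_1(\Delta-\{x_0\})\to H_1(\Delta)$, namely the span of cycles avoiding $e$, which excludes $[\gamma]$. On the other hand, a deformation retraction onto $G$ gives $[\gamma]=i_* r_*[\gamma]=[r\circ\gamma]$ with $r\circ\gamma$ landing in $G\subseteq X-\{x_0\}$, so $[\gamma]$ \emph{does} lie in that image --- a contradiction. Hence $\Delta\subseteq G$, completing the cycle of equivalences. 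Besides the null-family continuity step, this homological step is the other delicate point, its subtlety being the passage from set-theoretic containment of $\Delta$ to a statement about $H_1$, which I resolve by removing the single point $x_0$ and exploiting that the canonical retraction moves points only along fibers.
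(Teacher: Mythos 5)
Your proposal is correct, and on the two substantive implications it takes a genuinely different route from the paper. For (i)$\Rightarrow$(iv) the paper does not construct anything: it cites Berkovich's Proposition~4.1.6 and its proof, whereas you rebuild the canonical retraction by hand, gluing the strong deformation retractions of the dendrites $\overline{C}$ onto their attaching points $p_C$ (Lemma~\ref{L:connected component} plus Proposition~\ref{P:contractibility of dendrites}). This parallels what the paper itself does in the proof of Theorem~\ref{T:attaching dendrites}, except that there the metric is \emph{chosen} so that diameters shrink, while you must \emph{prove} the shrinking for the given $X$; in exchange, your construction makes the fiber-preservation property of (iv) manifest rather than extracted from Berkovich's proof. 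For (ii)$\Rightarrow$(i) the paper has a one-line argument from the Borsuk--Jaworowski theorem (any map homotopic to the inclusion of a simple closed curve has image containing that curve, so a retraction homotopic to the identity cannot push $\gamma$ into $G$ unless $\gamma\subseteq G$); your $H_1$ puncture argument replaces this citation by standard algebraic topology, at the cost of invoking the already-proved (i)$\Rightarrow$(iv) for $\Delta$, the finiteness of the graph $\Delta$, and a careful choice of $x_0$. Your argument is sound: fiber-preservation is exactly what lets the homotopy restrict to $X-\{x_0\}$, and the edge-coefficient of $e$ kills $[\gamma]$ in the image of $H_1(\Delta-\{x_0\})\to H_1(\Delta)$. (You should note explicitly that the attaching points $p_C$ are countable because the components of $X-\Delta$ are open and $X$ is separable, and that a simple closed curve contained in a finite graph is a union of full edges, so that $[\gamma]$ really has coefficient $\pm 1$ on $e$; both are easy.) The uniqueness argument for $r$ is essentially identical to the paper's.

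One step needs repair, though the conclusion you need is true. The ``null family'' property is \emph{not} a general fact about Peano continua and closed (even connected) subsets: in a closed disk, remove countably many pairwise disjoint open rectangles of diameter $1/2$; the complement $G$ is a subcontinuum of a Peano continuum whose complementary components do not shrink. What rescues your construction is precisely the extra structure from Lemma~\ref{L:connected component}: each component $C$ of $X-G$ has $\partial C=\{p_C\}$, a single point. Granting that, the null-family property follows: if infinitely many components $C_n$ had diameter $\geq\epsilon$, pick $x_n\in C_n$ with $d(x_n,p_{C_n})\geq\epsilon/3$, pass to a subsequence with $x_n\to x$, and choose a connected open neighborhood $U$ of $x$ of diameter $<\epsilon/6$ (local connectedness); $U$ meets two distinct components, hence is contained in neither, hence (being connected) meets both their boundaries, so some $p_{C_n}\in U$ together with $x_n\in U$, contradicting $d(x_n,p_{C_n})\geq\epsilon/3$. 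So state and prove the shrinking in this form, attributing it to the singleton-boundary structure rather than to Peano continua in general; with that emendation your proof is complete.
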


\begin{proof}
First we show that a retraction $r$ as 
in~\eqref{I:very strong deformation retract}
must be as characterized.
Suppose that $C$ is a connected component of $X-G$.
Any $c \in C$ is moved by the homotopy along a path ending on $G$,
and if we shorten it to a path $\gamma$ 
so that it ends as soon as it reaches $G$
then $\gamma$ stays within $X-G$ until it reaches its final point $g$
and hence stays within $C$ until it reaches $g$;
Hence $g \in \overline{C} \intersect G$, and $r(c)=g$.
Thus $r(C) \subseteq \overline{C} \intersect G$.
By Lemma~\ref{L:connected component}, 
$\#(\overline{C} \intersect G)=1$,
so $r$ is as characterized.

\eqref{I:contains core skeleton}$\Rightarrow$\eqref{I:very strong deformation retract}:
See \cite{Berkovich1990}*{Proposition~4.1.6} and its proof.

\eqref{I:very strong deformation retract}$\Rightarrow$\eqref{I:strong deformation retract}: Trivial.

\eqref{I:strong deformation retract}$\Rightarrow$\eqref{I:deformation retract}: Trivial.

\eqref{I:deformation retract}$\Rightarrow$\eqref{I:contains core skeleton}: 
The result of deforming 
the inclusion of a simple closed curve $\gamma$ in $X$
is a closed path whose 
image contains $\gamma$ \cite{Borsuk-Jaworowski1952}*{Theorem on p.~174},
so if $G$ is a deformation retract of $X$,
then $G$ must contain each simple closed curve, 
so $G$ contains the core skeleton.
\end{proof}

Given an embedding of local dendrites $G \injects X$,
call $X$ equipped with the embedding 
a \defi{$G$-dendrite} if the image of $G$ satisfies the conditions
of Proposition~\ref{P:G-dendrite};
we generally identify $G$ with its image.
Let $\DD_G$ be the category whose objects are $G$-dendrites
and whose morphisms are embeddings extending the identity $1_G \colon G \to G$.
Given a $G$-dendrite $X$ and $g \in G$,
let $X_g$ be the fiber $r^{-1}(g)$ with the point $g$ distinguished;
say that $g$ is a \defi{sprouting point} if $X_g$ is not a point.
Theorem~\ref{T:attaching dendrites} below
makes precise the statement that any $G$-dendrite
is obtained by attaching dendrites to countably many points of $G$.

\begin{theorem}
\label{T:attaching dendrites}
There is a fully faithful functor $F \colon \DD_G \to \prod_{g \in G} \Pointed$
sending a $G$-dendrite $X$ to the tuple of fibers $(X_g)_{g \in G}$, 
and its essential image consists of tuples $(D_g)$
such that $\{g \in G: \# D_g > 1 \}$ is countable.
\end{theorem}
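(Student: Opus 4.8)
The plan is to first verify that $F$ is well defined, then establish faithfulness and fullness, and finally pin down the essential image. Fix a $G$-dendrite $X$ with its retraction $r \colon X \to G$ as in Proposition~\ref{P:G-dendrite}\eqref{I:very strong deformation retract}. Each fiber $X_g = r^{-1}(g)$ is closed, hence compact, and the homotopy of~\eqref{I:very strong deformation retract} keeps every fiber invariant while contracting it to $g$, so $X_g$ is path connected; by Proposition~\ref{P:properties of local dendrites}\eqref{I:subcontinuum of local dendrite} it is a local dendrite, hence locally connected, and it contains no simple closed curve because all such curves lie in the core skeleton, which is contained in $G$. Thus $X_g$ is a dendrite, pointed by $g$. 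For a morphism $\phi \colon X \to X'$ in $\DD_G$ I would first check that $r' \circ \phi = r$: since $\phi$ fixes $G$ pointwise and is injective, it carries a component $C$ of $X - G$ into a single component $C'$ of $X' - G$, and $\overline C \intersect G = \phi(\overline C \intersect G) \subseteq \overline{C'} \intersect G$; as both sides are singletons by Lemma~\ref{L:connected component}, the attaching points agree. Hence $\phi$ restricts to a pointed embedding $\phi|_{X_g} \colon X_g \to X'_g$, and I set $F(\phi) = (\phi|_{X_g})_g$, with functoriality immediate. Faithfulness is then clear, since $X = \Union_{g \in G} X_g$ forces a morphism to be determined by its restrictions to the fibers.

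For fullness, given pointed embeddings $\psi_g \colon X_g \to X'_g$, I would glue them to a single map $\phi \colon X \to X'$; this is well defined because the fibers partition $X$, and $\phi|_G = 1_G$ because $\psi_g(g) = g$. Injectivity follows from the disjointness of the target fibers, and once $\phi$ is continuous it is automatically an embedding since $X$ is compact. Continuity at an interior point of a fiber is clear, as $r$ is locally constant there, so the crux is continuity at a point $g_\infty \in G$. Here I would argue by contradiction: if $x_n \to g_\infty$ while $\phi(x_n)$ stays a distance $\geq \epsilon$ from $\phi(g_\infty) = g_\infty$, then (replacing $g_n \colonequals r(x_n)$ by a subsequence, and discarding the indices with $g_n = g_\infty$, where continuity of $\psi_{g_\infty}$ already applies) we may assume $g_n \ne g_\infty$ and $\phi(x_n) \to y$ with $r'(y) = g_\infty$ and $y \ne g_\infty$. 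Choose a connected open neighborhood $U$ of $y$ of diameter less than $d(y, g_\infty)$, which exists by local connectedness and is arcwise connected by Proposition~\ref{P:properties of local dendrites}\eqref{I:arcwise connected local dendrite}. For large $n$ we have $\phi(x_n) \in U$, yet any arc from $\phi(x_n)$ to $y$ must pass through $g_\infty$: the point $y$ lies in a component $C'$ of $X' - G$ whose closure meets $G$ only at $g_\infty$, while $\phi(x_n) \notin \overline{C'}$, so the arc must cross the boundary point $g_\infty$ of $C'$ (Lemma~\ref{L:connected component}). This forces $g_\infty \in U$, contradicting the choice of $U$. Hence $\phi$ is continuous and $F(\phi) = (\psi_g)_g$.

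To identify the essential image I would prove both directions. For the necessity of countability, note that $X$ is a compact metric space, hence second countable, so the open subset $X - G$ is Lindelöf; since $X$ is locally connected, the components of $X - G$ are open, and a disjoint open cover of a Lindelöf space is countable. Each sprouting point has the form $\overline C \intersect G$ for some such component $C$ by Lemma~\ref{L:connected component}, so the set $\{g \in G : \# X_g > 1\}$ is countable, as required.

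For the converse, given a tuple $(D_g)$ with $\{g : \# D_g > 1\} = \{g_1, g_2, \dots\}$ countable, I would fix metric realizations of the nontrivial $D_{g_k}$, rescale the $k$-th one to have diameter at most $\epsilon_k$ with $\epsilon_k \to 0$, and attach it to $G$ at $g_k$, equipping the union $X$ with the natural glued metric in which paths between distinct attached dendrites run through $G$. The remaining work is to verify that $X$ is a local dendrite whose fibers satisfy $X_{g_k} \isom D_{g_k}$: compactness holds because the only new limit points of the shrinking branches lie on $G$; no attached dendrite introduces a simple closed curve, so the core skeleton stays inside $G$ and $G$ is a deformation retract by Proposition~\ref{P:G-dendrite}, making $X$ a $G$-dendrite with the prescribed fibers. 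I expect the main obstacle, shared with the fullness argument, to be local connectedness at the accumulation points of $\{g_k\}$: this is the quantitative counterpart of the arc-crossing argument above, and it is precisely what the condition $\epsilon_k \to 0$ is designed to guarantee.
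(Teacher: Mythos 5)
Your proposal is correct and follows the same overall architecture as the paper's proof: define $F$ via the fibers of the canonical retraction of Proposition~\ref{P:G-dendrite}\eqref{I:very strong deformation retract}, prove fullness by gluing the fiberwise embeddings, and realize the essential image by attaching metrized dendrites of diameter tending to zero. The differences are in how individual steps are discharged, and they are mostly improvements in detail. For countability of the sprouting points, the paper cites Kuratowski (\S 51, IV, Theorem~5 and \S 51, VII, Theorem~1), whereas your Lindel\"of argument (the components of $X-G$ are pairwise disjoint open sets in a second countable space, hence countably many, and each sprouting point is an attaching point of one of them) is elementary and self-contained. For fullness, the paper only says ``one checks that the union of the $f_g$ is a continuous injection''; your arc-crossing argument --- using that $\phi(x_n)$ lies outside $\overline{C'}$, that $\partial C' = \{g_\infty\}$ by Lemma~\ref{L:connected component}, and arcwise connectedness of small neighborhoods via Proposition~\ref{P:properties of local dendrites}\eqref{I:arcwise connected local dendrite} --- actually supplies that check, and is the most valuable addition. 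Conversely, in the essential-image direction the paper produces the strong deformation retraction directly by running the fiberwise contractions of Proposition~\ref{P:contractibility of dendrites} in parallel, while you route through Proposition~\ref{P:G-dendrite}; that is legitimate, but it obliges you first to verify that the glued space is a local dendrite, and you leave exactly the crux --- local connectedness (and compactness) at accumulation points of the sprouting points, which is what $\epsilon_k \to 0$ guarantees --- as an expectation rather than a proof. Since the paper itself dismisses this as ``straightforward to check,'' this is a deferral rather than a gap, but it is the one verification you should still write out.
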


\begin{proof}
Let $X$ be a $G$-dendrite.
For each $g \in G$,
the homotopy restricts to a contraction of $X_g$ to $g$,
so $X_g$ is a (pointed) dendrite.
By \cite{Kuratowski1968}*{\S51, IV, Theorem~5 and \S51, VII, Theorem~1}, 
$\{g \in G: \# X_g > 1 \}$ is countable.

The characterization of the retraction in 
Proposition~\ref{P:G-dendrite}\eqref{I:very strong deformation retract}
shows that a morphism of $G$-dendrites $X \to Y$ respects the retractions,
so it restricts to a morphism $X_g \to Y_g$ in $\Pointed$ for each $g \in G$.
This defines $F$.

Given $(D_g)_{g \in G} \in \prod_{g \in G} \Pointed$
with $\{g \in G: \# D_g > 1 \}$ countable,
choose a metric $d_{D_g}$ on $D_g$ such that the diameters of
the $D_g$ with $\#D_g>1$ tend to $0$ if there are infinitely many of them.
Identify the distinguished point of $D_g$ with $g$.
Let $X$ be the set $\coprod_{g\in G} D_g$
with the metric for which 
the distance between $x \in D_g$ and $x' \in D_{g'}$ is
\[
\begin{cases}
  d_{D_g}(x,x'), & \textup{ if $g=g'$,} \\
  d_{D_g}(x,g) + d_G(g,g') + d_{D_{g'}}(g',x'), & \textup{ if $g \ne g'$.} \\
\end{cases}
\]
It is straightforward to check that $X$ is compact and locally connected
and that the map $G \to X$ is an embedding.
By Proposition~\ref{P:contractibility of dendrites},
there is a strong deformation retraction of $D_g$ onto $\{g\}$;
running these deformations in parallel yields 
a strong deformation retraction of $X$ onto $G$.
Thus $X$ is a $G$-dendrite.
Moreover, $F$ sends $X$ to $(D_g)_{g \in G}$.
Thus the essential image is as claimed.

Given $X,Y \in \DD_G$,
and given morphisms $f_g \colon X_g \to Y_g$ in $\Pointed$ for all $g \in G$,
there exists a unique morphism $f \colon X \to Y$ in $\DD_G$
mapped by $F$ to $(f_g)_{g \in G}$;
namely, one checks that the union $f$ of the $f_g$
is a continuous injection, and hence an embedding.
Thus $F$ is fully faithful.
\end{proof}

\subsection{The universal \texorpdfstring{$G$}{G}-dendrite}

Let $G$ be a local dendrite.
Given a countable subset $G_0 \subseteq G$,
Theorem~\ref{T:attaching dendrites}
yields a $G$-dendrite $W_{G,G_0}$ 
whose fiber at $g \in G$
is the universal pointed dendrite $(W,w)$ if $g \in G_0$
and a point if $g \notin G_0$.
By Theorems \ref{T:attaching dendrites} 
and~\ref{T:universal pointed dendrite}, 
any $G$-dendrite with all sprouting points in $G_0$
admits a morphism to $W_{G,G_0}$.

Now let $G$ be a finite connected graph.
Fix a countable dense subset $G_0 \subseteq G$
containing all vertices of $G$.
Define $W_G \colonequals W_{G,G_0}$,
and call it the \defi{universal $G$-dendrite}. 
Its homeomorphism type is independent of the choice of $G_0$,
since the possibilities for $G_0$ 
are permuted by the self-homeomorphisms of $G$ fixing its vertices.
Any $G$-dendrite has its sprouting points contained in some $G_0$ as above
(just take the union with a $G_0$ from above),
so every $G$-dendrite embeds as a topological space into $W_G$.

\begin{theorem}
\label{T:local dendrite is W_G}
Let $X$ be a local dendrite, and let $G$ be its core skeleton.
Suppose that $G \ne \emptyset$,
that the branch points of $X$ are dense in $X$,
and that there are $\aleph_0$ branches at each branch point.
Then $X$ is homeomorphic to $W_G$.
\end{theorem}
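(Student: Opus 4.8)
The plan is to compare $X$ with $W_G$ through the functor $F\colon \DD_G \to \prod_{g\in G}\Pointed$ of Theorem~\ref{T:attaching dendrites}. First I would note that since $G$ is the core skeleton and $G \ne \emptyset$, Proposition~\ref{P:G-dendrite} makes $X$ a $G$-dendrite, with a canonical retraction $r$ whose fibers $X_g = r^{-1}(g)$ are pointed dendrites; set $S \colonequals \{g \in G : \#X_g > 1\}$, which is countable by Theorem~\ref{T:attaching dendrites}. The goal then becomes to show that $F(X)$ is isomorphic to $F(W_{G,S})$ in $\prod_{g}\Pointed$, i.e.\ that each nontrivial fiber $X_g$ (for $g \in S$) is isomorphic as a \emph{pointed} dendrite to the universal pointed dendrite $(W,w)$; full faithfulness of $F$ then upgrades this to a homeomorphism $X \isom W_{G,S}$, and a separate check that $S$ is an admissible index set will give $W_{G,S} \isom W_G$.

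Second, the heart of the argument is the identification of the fibers. Fix $g \in S$. By Lemma~\ref{L:connected component} the fiber $X_g = \overline{C}$ is a dendrite meeting $G$ only at $g$, and $C = X_g - \{g\}$ is open in $X$. A branch point of $X$ lying in $C$ has a neighborhood contained in $X_g$, so it is a branch point of $X_g$ with the same ($\aleph_0$) branches; since the branch points of $X$ are dense in $X$, they are dense in the open set $C$, hence dense in $X_g$. Thus $X_g$ has a dense set of branch points each with $\aleph_0$ branches, so $X_g \isom W$ by Theorem~\ref{T:Wazewski}. To control the base point I would count branches at $g$ itself: the graph $G$ has no vertex of degree $\le 1$, so $g$ has finitely many but at least two branches inside $G$, and at least one into $X_g$; hence $g$ is a branch point of $X$ and has $\aleph_0$ branches, of which only finitely many lie in $G$, so $\aleph_0$ lie in $X_g$ and $g$ is a branch point of $X_g$. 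As in the proof of Theorem~\ref{T:universal pointed dendrite}, Charatonik's homogeneity of branch points then yields a self-homeomorphism of $W$ carrying the image of $g$ to $w$, giving $(X_g,g) \isom (W,w)$.

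Finally I would verify that $S$ is a countable dense subset of $G$ containing all of its vertices, so that $W_{G,S} \isom W_G$ by the stated independence of the homeomorphism type of $W_G$ on the choice of $G_0$. Density follows again from density of the branch points of $X$: near any $g_0 \in G$ a branch point $b$ satisfies $r(b) \in S$ with $r(b)$ close to $g_0$ (if $b \notin G$ then $r(b) = g$ with $X_g$ nontrivial; if $b \in G$ it is either a vertex or already lies in $S$). Each vertex $v$ of $G$ (degree $\ge 3$) is a branch point of $X$ with $\aleph_0$ branches but only finitely many inside $G$, so $X_v$ is nontrivial and $v \in S$. Combining the steps gives $X \isom W_{G,S} \isom W_G$. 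I expect the main obstacle to be controlling the \emph{pointed} structure of the fibers --- proving that the attaching point $g$ is itself a branch point of $X_g$, so that $X_g$ is isomorphic to $(W,w)$ and not merely to $W$ --- together with the bookkeeping needed to see that $S$ meets every admissibility requirement for $G_0$.
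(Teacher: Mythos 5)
Your proposal is correct and takes essentially the same approach as the paper: both realize $X$ as a $G$-dendrite via Proposition~\ref{P:G-dendrite} and Theorem~\ref{T:attaching dendrites}, deduce density of sprouting points from density of branch points, identify each nontrivial fiber with the universal pointed dendrite $(W,w)$ via Wa{\.z}ewski's theorem and Charatonik's homogeneity of branch points, and conclude from the construction of $W_G$. Indeed, your branch-counting at an attaching point $g$ (finitely many branches inside $G$, hence $\aleph_0$ inside $X_g$, so $g$ is a branch point of $X_g$ and the fiber is $(W,w)$ as a \emph{pointed} dendrite, and likewise every vertex of degree $\ge 3$ is a sprouting point) makes explicit steps the paper leaves implicit; the only detail you skip is that a degree-$2$ vertex of $G$ need not be a sprouting point, which the paper dispatches in its first line by repositioning such vertices with a self-homeomorphism of $G$.
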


\begin{proof}
The vertices of $G$ of degree~$3$ or more are among the branch points of $X$.
After applying a homeomorphism of $G$ (to shift degree~$2$ vertices),
we may assume that \emph{all} the vertices of $G$ are branch points of $X$.
Since the branch points of $X$ are dense in $X$,
the sprouting points must be dense in $G$.
For each sprouting point $g \in G$,
the fiber $X_g$ satisfies the hypotheses of 
Theorem~\ref{T:Wazewski}, so $X_g$ is the universal pointed dendrite.
Thus $X$ is homeomorphic to $W_G$, by construction of the latter.
\end{proof}

\subsection{Euclidean embeddings}

\begin{proposition}\hfill
\label{P:embedding local dendrites}
\begin{enumerate}[\upshape (a)]
\item\label{I:local dendrite embeds in R^3} 
Every local dendrite embeds in $\R^3$.
\item Let $X$ be a local dendrite, 
and let $G \subseteq X$ be a finite connected graph
containing all the simple closed curves.
Then the following are equivalent:
\begin{enumerate}[\upshape (i)]
\item $X$ embeds into $\R^2$.
\item $G$ embeds into $\R^2$.
\item $G$ does not contain a subgraph isomorphic to 
a subdivision of the complete graph $K_5$ 
or the complete bipartite graph $K_{3,3}$.
\end{enumerate}
\end{enumerate}
\end{proposition}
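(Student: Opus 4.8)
The two parts call for genuinely different techniques, so I would treat them separately.

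\emph{Part~\eqref{I:local dendrite embeds in R^3}.} The plan is to avoid the $G$-dendrite machinery entirely and argue through dimension theory. This is not just a matter of taste: the proof of Proposition~\ref{P:compact metrizable quasi-polyhedron}\eqref{I:open subset of local dendrite is quasi-polyhedron}, and hence everything built on it (Lemma~\ref{L:connected component}, Proposition~\ref{P:G-dendrite}, Theorem~\ref{T:attaching dendrites}), already invokes the present statement, so using those results here would be circular. Instead, I would first record that a local dendrite $X$ has covering dimension at most~$1$: by \cite{Kuratowski1968}*{\S51, VII, Theorem~1} every point of $X$ has arbitrarily small open neighborhoods with finite, hence zero-dimensional, boundary, which is exactly the inductive criterion for $\dim X \le 1$. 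Since $X$ is then a compact metrizable space of dimension at most~$1$, the Menger--N\"obeling embedding theorem recalled in Remark~\ref{R:history} (equivalently, Freudenthal's description of $X$ as an inverse limit of finite graphs together with Proposition~\ref{P:embedding inverse limit} in the case $d=1$) yields an embedding $X \injects \R^{3}$ directly.

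\emph{Part~(b).} Here the equivalence breaks into three routine pieces and one substantive one. The implication (i)$\Rightarrow$(ii) is immediate, since $G$ is a subspace of $X$ and the restriction of an embedding is an embedding. The equivalence (ii)$\Leftrightarrow$(iii) is precisely Kuratowski's classical planarity criterion for finite graphs. The real content is therefore (ii)$\Rightarrow$(i): given a planar embedding of $G$, extend it to one of all of $X$.

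For (ii)$\Rightarrow$(i) I would first note that $G$, being a subcontinuum of $X$ containing every simple closed curve, contains the core skeleton by Proposition~\ref{P:core skeleton}\eqref{I:smallest subcontinuum}; hence by Proposition~\ref{P:G-dendrite} the space $X$ is a $G$-dendrite. By Theorem~\ref{T:attaching dendrites}, $X$ is determined up to homeomorphism over $G$ by its family of pointed fibers $(X_g)_{g\in G}$, of which only countably many, say those over $g_1,g_2,\dots$, are nondegenerate, with attached diameters that may be taken to tend to~$0$. Starting from the given planar copy of $G$, I would attach, one sprouting point at a time, a planar copy of the dendrite $X_{g_i}$ (each dendrite embeds in $\R^{2}$ by Theorem~\ref{T:traceable}) inside a small disk centered at $g_i$ and lying in the complement of $G$: at a point interior to an edge there are two local sides available, and at the finitely many vertices there are finitely many sectors between consecutive edges, so in either case a sufficiently small scaled copy of $X_{g_i}$ fits into one such region, meeting $G$ only at $g_i$.

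The main obstacle is the \emph{simultaneous} disjointness, together with the verification that the resulting subset of $\R^{2}$ is genuinely homeomorphic to $X$. Because the sprouting points are dense in $G$, copies attached at nearby points threaten to overlap. I would resolve this by performing the attachment inductively: at the $i$-th step, choose the scale of $X_{g_i}$ small enough to avoid $G$ and the finitely many previously attached dendrites, while still forcing the diameters to tend to~$0$ (shrinking the copies attached near each vertex so the finitely many vertex-sectors are never overcrowded, and alternating the two sides along an edge where convenient). The diameter condition guarantees that the union is compact and that the natural bijection from the abstract $G$-dendrite with fibers $(X_g)$ onto this planar set is a homeomorphism; by the essential-image and full-faithfulness statements of Theorem~\ref{T:attaching dendrites}, this planar set is a $G$-dendrite with the same fibers as $X$, hence homeomorphic to $X$. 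This establishes (i) and closes the cycle of implications.
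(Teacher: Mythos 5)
Your part~\eqref{I:local dendrite embeds in R^3} is correct and is essentially the paper's own argument: the paper likewise quotes \cite{Kuratowski1968}*{\S51, VII, Theorem~1} (rim-finiteness, i.e.\ regularity of the continuum) to get dimension at most~$1$, and then invokes the embedding theorem recalled in Remark~\ref{R:history}; your observation that the $G$-dendrite machinery must be avoided here matches the paper's own parenthetical remark in the proof of Proposition~\ref{P:compact metrizable quasi-polyhedron}\eqref{I:open subset of local dendrite is quasi-polyhedron}. For part (b), however, the paper's entire proof is a citation to \cite{Kuratowski1930}, whose planarity criterion applies to the continua at hand (note that any topological $K_5$ or $K_{3,3}$ in $X$ is a union of simple closed curves and hence lies in $G$, so the criterion for $X$ and for $G$ coincide). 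Your implications (i)$\Rightarrow$(ii) and (ii)$\Leftrightarrow$(iii) are fine, and your constructive route to (ii)$\Rightarrow$(i) is genuinely different from the paper's --- but it contains a real gap.

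The gap is the sentence ``a sufficiently small scaled copy of $X_{g_i}$ fits into one such region, meeting $G$ only at $g_i$.'' Scaling is a similarity: it shrinks diameters but does not change the set of directions in which the embedded dendrite leaves its distinguished point, and that point must stay anchored at $g_i$. Theorem~\ref{T:traceable} provides \emph{some} planar embedding of $X_{g_i}$, and nothing prevents it from being, say, a null fan: segments emanating from the distinguished point in a dense set of directions, with lengths tending to $0$ (this is a dendrite, legitimately embedded in the plane). No translate or scaled copy of that picture lies in a half-disk or sector with the distinguished point at the corner, so your induction cannot even start from an arbitrary planar copy. What is actually needed is a re-embedding lemma: \emph{every} pointed dendrite $(D,p)$ admits an embedding into an arbitrarily thin closed wedge taking $p$ to the apex and $D-\{p\}$ into the open wedge. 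This is true, but it is the real content of (ii)$\Rightarrow$(i) and does not follow from planarity plus scaling; one way to prove it with the paper's tools is to embed $(D,p)$ into $(W,w)$ by Theorem~\ref{T:universal pointed dendrite}, then construct a copy of $W$ inside the wedge directly (a null broom of segments at the apex followed by recursive dense sprouting with diameters tending to $0$), and identify it with $(W,w)$ using the uniqueness in Theorem~\ref{T:Wazewski} together with the homogeneity result \cite{Charatonik1991}*{Proposition~4.7} already used in the paper. Once that lemma is in place, the rest of your bookkeeping (finitely many previously attached copies at each step, diameters tending to $0$, compactness of the union, and the identification with $X$ via full faithfulness in Theorem~\ref{T:attaching dendrites}) does go through.
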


\begin{proof}\hfill
  \begin{enumerate}[\upshape (a)]
  \item 
A local dendrite is 
a regular continuum \cite{Kuratowski1968}*{\S51, VII, Theorem~1},
and hence of dimension~$1$,
so it embeds in $\R^3$ by as discussed in Remark~\ref{R:history}.
  \item See~\cite{Kuratowski1930}.\qedhere
  \end{enumerate}
\end{proof}

%****************************************************************************
\section{Berkovich curves}\label{S:Berkovich curves}

Finally, we build on \cite{Berkovich1990} (especially Section~4 therein)
and the theory of local dendrites to describe the homeomorphism type
of a Berkovich curve. 
See also the forthcoming 
book by A.~Ducros~\cite{Ducros-preprint},
which will contain a systematic study of Berkovich curves.

\begin{theorem}
\label{T:Berkovich space and local dendrites}
Let $K$ be a complete valued field having a countable dense subset.
Let $V$ be a projective $K$-scheme of pure dimension~$1$.
\begin{enumerate}[\upshape (a)]
\item\label{I:union of local dendrites}
The topological space $V^{\an}$ 
is a finite disjoint union of local dendrites.
\item\label{I:Berkovich space is universal dendrite}
Suppose that $V$ is also smooth and connected, 
and that $K$ has nontrivial value group.
\begin{enumerate}[\upshape (i)]
\item If $V^{\an}$ is simply connected,
then $V^{\an}$ is homeomorphic to the Wa{\.z}ewski universal dendrite $W$.
\item If $V^{\an}$ is not simply connected, let $G$ be its core skeleton;
then $V^{\an}$ is homeomorphic to the universal $G$-dendrite $W_G$.
\end{enumerate}
\end{enumerate}
\end{theorem}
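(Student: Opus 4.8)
The plan is to prove part~\eqref{I:union of local dendrites} first and then bootstrap to part~\eqref{I:Berkovich space is universal dendrite}, feeding the topological machinery of Sections~\ref{S:dendrites} and~\ref{S:local dendrites} exactly two analytic inputs: a quasi-polyhedron structure on $V^{\an}$ and a description of its branch points.

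For part~\eqref{I:union of local dendrites}, I would first reduce to a single connected reduced curve. Since $V$ is projective it is Noetherian, hence has finitely many connected components, and $V^{\an}$ is connected if and only if $V$ is \cite{Berkovich1990}*{Proposition~3.4.8(iii)}; moreover the topology of $V^{\an}$ depends only on $V_{\red}$. For a connected reduced projective curve, the structure theory of one-dimensional Berkovich spaces \cite{Berkovich1990}*{\S4} shows that $V^{\an}$ is a quasi-polyhedron (in the singular case one may pass to the normalization, whose analytification is a quasi-polyhedron and which surjects onto $V^{\an}$ by identifying finitely many points). As $V$ is projective, $V^{\an}$ is compact, and it is metrizable by Theorem~\ref{T:main}; Proposition~\ref{P:compact metrizable quasi-polyhedron}\eqref{I:quasi-polyhedron is local dendrite} then identifies each connected component as a local dendrite, giving the asserted finite disjoint union.

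For part~\eqref{I:Berkovich space is universal dendrite}, part~\eqref{I:union of local dendrites} already makes $V^{\an}$ a connected local dendrite. The two facts I must extract from the theory of Berkovich curves are that the branch points of $V^{\an}$ are dense and that there are exactly $\aleph_0$ branches at each one. Both flow from the classification of points: the branch points are precisely the type~$2$ points, these are dense once the value group is nontrivial, and the branches emanating from such a point are in bijection with the closed points of its residue curve, a curve over the residue field of $K$. That residue field is countable, because reduction (being locally constant on the valuation ring) carries a countable dense subset of $K$ onto it, and the finitely generated transcendence-degree-one extensions occurring at type~$2$ points remain countable; as a curve over any field carries infinitely many closed points while a curve over a countable field carries only countably many, the branch count is exactly $\aleph_0$.

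With these inputs the conclusion is formal. If $V^{\an}$ is simply connected, then being a simply connected local dendrite it is a dendrite by Proposition~\ref{P:properties of local dendrites}\eqref{I:dendrite vs local dendrite}; having dense branch points and $\aleph_0$ branches at each, the uniqueness clause of Theorem~\ref{T:Wazewski} forces $V^{\an} \isom W$. If $V^{\an}$ is not simply connected, its core skeleton $G$ is nonempty and, by the remark following Proposition~\ref{P:core skeleton}, is a finite connected graph, so $W_G$ is defined; Theorem~\ref{T:local dendrite is W_G} then yields $V^{\an} \isom W_G$. I expect the main obstacle to lie not in this topological bookkeeping but in the analytic input to part~\eqref{I:Berkovich space is universal dendrite}: establishing that branch points are dense and that each carries precisely $\aleph_0$ branches, neither finitely many nor uncountably many, which is exactly where completeness of $K$, nontriviality of the value group, and the countable-dense hypothesis all enter.
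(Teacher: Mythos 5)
Your part~\eqref{I:union of local dendrites} is essentially the paper's own argument (connectedness and compactness from \cite{Berkovich1990}*{Theorem~3.4.8}, metrizability from Theorem~\ref{T:main}, the quasi-polyhedron property from \cite{Berkovich1990}*{\S4}, then Proposition~\ref{P:compact metrizable quasi-polyhedron}), and your final topological bookkeeping in part~\eqref{I:Berkovich space is universal dendrite} (Proposition~\ref{P:properties of local dendrites}\eqref{I:dendrite vs local dendrite}, Theorem~\ref{T:Wazewski}, Theorem~\ref{T:local dendrite is W_G}) also matches the paper. The gap is exactly where you predicted it would be: the two analytic inputs. You assert that they ``flow from the classification of points,'' but they do not. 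The classification of points of a one-dimensional Berkovich space into types 1--4 is a pointwise statement; by itself it says nothing about whether type~2 points are dense in $V^{\an}$ for an arbitrary smooth projective curve $V$, nor about how many connected components a punctured neighborhood of such a point has. Both facts require genuine local structure theory of curves. The paper supplies this by choosing a semistable decomposition of $V^{\an}$ \cite{Baker-Payne-Rabinoff-preprint}*{Definition~5.15}, so that $V^{\an}$ is covered by a skeleton together with open balls and open annuli, each homeomorphic to an open subspace of $(\PP^1_K)^{\an}$, where density of type~2 points is visible; and it counts branches via \cite{Baker-Payne-Rabinoff-preprint}*{Lemma~5.66(3)}, which identifies the branches at a type~2 point with the closed points of a residue curve.

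There is a second, related gap: those tools are stated for $K$ algebraically closed, whereas the theorem only assumes $K$ complete with nontrivial value group (e.g., $K=\Q_p$, whose own value group is discrete, not dense; note that ``nontrivial implies dense'' is only automatic for algebraically closed $K$). Your claims that the branch points of $V^{\an}$ are \emph{precisely} the type~2 points and that the branches at one are in bijection with the closed points of a curve over the residue field of $K$ are, over a general complete $K$, themselves things to be proved. The paper handles this by passing to $K'$, the completion of an algebraic closure of $K$, running the argument there, and then descending: by \cite{Berkovich1990}*{Corollary~1.3.6}, $V^{\an}$ is the quotient of $(V_{K'})^{\an}$ by the absolute Galois group of $K$, the branch points of $V^{\an}$ are the images of those of $(V_{K'})^{\an}$ (hence dense), and the branches at each are in bijection with the closed points of a curve over a finite extension of the (countable) residue field $k$, hence number $\aleph_0$. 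Your proposal contains no descent step of this kind, so as written it proves the theorem only under the stronger hypothesis that $K$ is algebraically closed (and even then, only after replacing ``classification of points'' by the semistable-decomposition argument above).
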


\begin{proof}
\hfill
\begin{enumerate}[\upshape (a)]
\item 
We may assume that $V$ is connected,
so $V^{\an}$ is connected by~\cite{Berkovich1990}*{Theorem~3.4.8(iii)}.
Also, $V^{\an}$ is compact by~\cite{Berkovich1990}*{Theorem~3.4.8(ii)}.
It is metrizable by Theorem~\ref{T:main}.
It is a quasi-polyhedron by 
the proof of~\cite{Berkovich1990}*{Corollary~4.3.3}.
So $V^{\an}$ is a local dendrite 
by Proposition~\ref{P:compact metrizable quasi-polyhedron}.
\item 
Let $k$ be the residue field of $K$.
Since $K$ has a countable dense subset, $k$ is countable,
so any $k$-curve has exactly $\aleph_0$ closed points.

First suppose that $K$ is algebraically closed.  In particular $K$ has dense value group.
Choose a semistable decomposition of $V^{\an}$ 
(see \cite{Baker-Payne-Rabinoff-preprint}*{Definition~5.15}).
Each open ball and open annulus in the decomposition
is homeomorphic to an open subspace of $(\PP^1_K)^{\an}$,
in which the branch points 
(type~(2) points in the terminology of \cite{Berkovich1990}*{1.4.4}) 
are dense by the assumption on the value group, 
so the branch points are dense in $V^{\an}$.
At each branch point,
the branches are in bijection with the closed points of a $k$-curve
by \cite{Baker-Payne-Rabinoff-preprint}*{Lemma~5.66(3)},
so their number is $\aleph_0$.

Now suppose that $K$ is not necessarily algebraically closed.
Let $K'$ be the completion of an algebraic closure of $K$.
Then \cite{Berkovich1990}*{Corollary~1.3.6} implies that
$V^{\an}$ is the quotient of $(V_{K'})^{\an}$ by
the absolute Galois group of $K$.
It follows that the branch points of $V^{\an}$
are the images of the branch points of $(V_{K'})^{\an}$,
and that the branches at each branch point of $V^{\an}$
are in bijection with the closed points of some 
curve over a finite extension of $k$.
Thus, as for $(V_{K'})^{\an}$, 
the branch points of $V^{\an}$ are dense,
and there are $\aleph_0$ branches at each branch point.

Finally, according to whether $G$ is simply connected or not,
Theorem~\ref{T:Wazewski} or Theorem~\ref{T:local dendrite is W_G}
shows that $V^{\an}$ has the stated homeomorphism type.
\qedhere
\end{enumerate}
\end{proof}

\begin{corollary}
\label{C:P^1}
Let $K$ be a complete valued field having a countable dense subset
and dense value group.
Then $(\PP^1_K)^{\an}$ is homeomorphic to $W$.
\end{corollary}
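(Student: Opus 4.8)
The plan is to read the corollary off Theorem~\ref{T:Berkovich space and local dendrites}. Take $V=\PP^1_K$: this is a smooth, connected, projective $K$-scheme of pure dimension~$1$, and by hypothesis $K$ has a countable dense subset and dense value group, so in particular its value group is nontrivial. Thus the hypotheses of part~\eqref{I:Berkovich space is universal dendrite} are met. That part splits according to whether $(\PP^1_K)^{\an}$ is simply connected, giving $W$ in case~(i) and some universal $G$-dendrite $W_G$ in case~(ii). Since the corollary asserts the answer $W$, the whole argument reduces to checking that $(\PP^1_K)^{\an}$ is simply connected, so that case~(i) applies.

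To make that checkable, I would first use part~\eqref{I:union of local dendrites}: as $\PP^1_K$ is connected, $(\PP^1_K)^{\an}$ is a single local dendrite. By Proposition~\ref{P:properties of local dendrites}\eqref{I:dendrite vs local dendrite} a local dendrite is simply connected precisely when it is a dendrite, and by Proposition~\ref{P:properties of local dendrites}\eqref{I:simply connected local dendrite} (applied with the open set taken to be the whole space) this holds precisely when it contains no simple closed curve. Hence the corollary comes down to the single geometric input that $(\PP^1_K)^{\an}$ contains no simple closed curve.

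This input is the topological shadow of the fact that $\PP^1$ has genus~$0$, and it is where the real content lies. The cleanest route is to invoke the classical fact that over any complete valued field the Berkovich projective line is uniquely arcwise connected (indeed contractible); a simple closed curve would provide two distinct arcs between two of its points, so none can exist. If instead one wishes to build from the algebraically closed case, as in the proof of part~\eqref{I:Berkovich space is universal dendrite}, one sets $K'$ to be the completion of an algebraic closure of $K$ and uses \cite{Berkovich1990}*{Corollary~1.3.6} to present $(\PP^1_K)^{\an}$ as the Galois quotient of $(\PP^1_{K'})^{\an}$; a semistable decomposition of the genus-$0$ curve $\PP^1$ has no annulus contributing a loop, so $(\PP^1_{K'})^{\an}$ has empty core skeleton, i.e.\ no simple closed curve. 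I expect the main obstacle to be exactly the transfer of this emptiness across the Galois quotient, since quotient maps can in principle create loops; I would control it by the same core-skeleton comparison that underlies part~\eqref{I:Berkovich space is universal dendrite}, showing the skeleton downstairs is the image of the (empty) skeleton upstairs. Granting that $(\PP^1_K)^{\an}$ is simply connected, case~(i) of Theorem~\ref{T:Berkovich space and local dendrites}\eqref{I:Berkovich space is universal dendrite} yields $(\PP^1_K)^{\an}\isom W$, as claimed.
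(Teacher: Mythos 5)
Your proposal is correct and follows essentially the same route as the paper: the paper's proof simply cites \cite{Berkovich1990}*{Theorem~4.2.1} for the simple connectedness of $(\PP^1_K)^{\an}$ and then invokes Theorem~\ref{T:Berkovich space and local dendrites}\eqref{I:Berkovich space is universal dendrite}(i), which is exactly your primary argument (your ``classical fact'' that the Berkovich projective line is uniquely arcwise connected, hence has no simple closed curve, is precisely the content of that citation). Your alternative route via semistable decomposition and Galois quotients is unnecessary here and is left incomplete, but the main argument stands on its own.
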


\begin{proof}
It is simply connected by~\cite{Berkovich1990}*{Theorem~4.2.1}, 
so Theorem~\ref{T:Berkovich space and local dendrites}\eqref{I:Berkovich space is universal dendrite}(i) 
applies.
\end{proof}

\begin{remark}
Any finite connected graph with no vertices of degree 
less than or equal to~$1$ can arise
as the core skeleton $G$
in Theorem~\ref{T:Berkovich space and local dendrites}\eqref{I:Berkovich space is universal dendrite}(ii):
see~\cite{Berkovich1990}*{proof of Corollary~4.3.4}.
In particular, there exist smooth projective curves $V$ such that 
$V^{\an}$ cannot be embedded in $\R^2$.
\end{remark}

\begin{remark}
Theorem~\ref{T:Berkovich space and local dendrites}
also lets us understand the topology of Berkovich spaces
associated to curves that are only \emph{quasi-projective}.
Let $U$ be a quasi-projective curve.
Write $U = V-Z$ for some projective curve $V$
and finite subscheme $Z \subseteq V$.
Then $Z^{\an}$ is a closed subset of $V^{\an}$ 
with one point for each closed point of $Z$,
and $U^{\an} = V^{\an} - Z^{\an}$.
\end{remark}

\begin{remark}
Even more generally, the arguments apply equally well to
Berkovich curves that do not arise as analytification of algebraic curves.
\end{remark}

\begin{remark}
The smoothness assumption in 
Theorem~\ref{T:Berkovich space and local dendrites}\eqref{I:Berkovich space is universal dendrite} 
can be weakened to the statement that 
the normalization morphism $\widetilde{V} \to V$ 
has no fibers with three or more schematic points.     
\end{remark}

\begin{remark}  
If in Theorem~\ref{T:Berkovich space and local dendrites}\eqref{I:Berkovich space is universal dendrite} 
we drop any of the hypotheses, 
then the result fails; we describe the situations that arise.
\begin{itemize}

\item 
If $V$ is the non-smooth curve consisting of three copies of $\PP^1_K$
attached at a $K$-point of each,
then $V^{\an}$ consists of three copies of $W$ attached in the same way;
this is a dendrite, but it has a branch point of order~$3$,
so it cannot be homeomorphic to $W$.  
More generally, if the normalization $\widetilde{V}$ 
has three distinct schematic points above some point $a$ of $V$,
the same argument applies.   
\item 
If $V$ is disconnected, then so is $V^{\an}$, so it cannot be 
homeomorphic to $W$ or $W_G$. 
In this case, $V^{\an}$ is the disjoint union 
of the analytifications of the connected components of $V$.  
\item 
Suppose that $V$ is smooth and connected, but $K$ has trivial value group.
Then $V^{\an}$ is a dendrite consisting of $\aleph_0$ intervals
emanating from one branch point; cf.~\cite{Berkovich1993}*{p.~71}.
Equivalently, $V^{\an}$ is the one-point compactification
of $|V| \times [0,\infty)$,
where $|V|$ is the set of closed points of $V$ with the discrete topology.
  \end{itemize}
\end{remark}

\begin{remark}
As is well-known to 
experts~\cites{Thuillier-thesis, Baker-Payne-Rabinoff-preprint},
there is a metrized variant of 
Theorem~\ref{T:Berkovich space and local dendrites}.
We recall a few definitions; cf.~\cite{Mayer-Nikiel-Oversteegen1992}.
An \defi{$\R$-tree} is a uniquely arcwise connected metric space
in which each arc is isometric to a subarc of $\R$.
Let $A$ be a countable subgroup of $\R$,
and let $A_{\ge 0}$ (resp.\ $A_{>0}$) 
be the set of nonnegative (resp.\ positive) numbers in $A$.
An \defi{$A$-tree} is an $\R$-tree $X$ equipped with a point $x \in X$
such that the distance from each branch point to $x$ lies in $A$.

More generally, we may introduce variants that are not simply connected.
Let us define an \defi{$\R$-graph} to be an arcwise connected metric space
$X$ such that each arc of $X$ is isometric to a subarc of $\R$
and $X$ contains at most finitely many simple closed curves.
Define an \defi{$A$-graph} to be an $\R$-graph $X$
equipped with a point $x \in X$
such that the length of every arc from $x$ to a branch point or to itself 
is in $A$.
Given an $A$-graph $(X,x)$,
let $B(X)$ be the set of points $y \in B$ not of degree~$1$
such that $y$ is an endpoint of an arc of length in $A_{\ge 0}$ 
emanating from $x$.
Then let $\calE(X)$ be the $A$-graph obtained
by attaching $\aleph_0$ isometric copies of $[0,\infty)$
and of $[0,a]$ for each $a \in A_{>0}$
to each $y \in B(X)$ (i.e., identify each $0$ with $y$).
Let $\calE^n(X)\colonequals \calE(\calE(\cdots(\calE(X))\cdots))$.
The direct limit of the $\calE^n(X)$ is an $A$-graph $\calW^A_X$.
If $X$ is a point, define $\calW^A \colonequals \calW^A_X$,
which is a universal separable $A$-tree in the sense
of \cite{Mayer-Nikiel-Oversteegen1992}*{Section~2},
because it contains the space obtained by attaching
only copies of $[0,\infty)$ at each stage;
the latter is the universal separable $A$-tree 
constructed in \cite{Mayer-Nikiel-Oversteegen1992}*{Theorem~2.6.1}.

Let $K$ be a complete valued field having a countable dense subset.
Let $A_0$ be the value group of $K$, expressed as an additive subgroup of $\R$.
Let $A \le R$ be the $\Q$-vector space spanned by $A_0$.
Let $V$ be a projective $K$-scheme of pure dimension~$1$.
Let $V^{\anm}$ be the subset of $V^{\an}$ 
consisting of the complement of the type~(1) points
(the points corresponding to closed points of $V$).
Then $V^{\anm}$ admits a canonical metric,
whose existence is related to the fact that on the segments of 
the skeleta of $V^{\an}$, away from the endpoints, 
one has an integral affine 
structure \cite{Kontsevich-Soibelman2006}*{Section~2}.
If $V^{\anm}$ is simply connected,
then $V^{\anm}$ is isometric to $\calW^A$;
otherwise $V^{\anm}$ is isometric to $\calW^A_G$,
where $G$ is the core skeleton of $V^{\an}$ with the induced metric.
\end{remark}

\begin{warning}
The metric topology on $V^{\anm}$ is strictly stronger than
the subspace topology on $V^{\anm}$ induced from $V^{\an}$: 
see \cite{Favre-Jonsson2004}*{Chapter~5} 
and \cite{Baker-Rumely2010}*{Section~B.6}.
Nevertheless, when $V$ is smooth and complete, 
the topological space $V^{\an}$ can be recovered 
from the metric space $V^{\anm}$.
\end{warning}

%****************************************************************************
\section*{Acknowledgements} 

We thank Sam Payne for suggesting the argument and references 
for the facts that a Berkovich curve satisfying our hypotheses
has a dense set of branch points and that each has $\aleph_0$ branches.
The third author thanks the Centre Interfacultaire Bernoulli for its
hospitality.

\begin{bibdiv}
\begin{biblist}

% \bibselect{big}

\bib{Baker-Payne-Rabinoff-preprint}{misc}{
  author={Baker, Matthew},
  author={Payne, Sam},
  author={Rabinoff, Joseph},
  title={Nonarchimedean geometry, tropicalization, and metrics on curves},
  date={2012-05-24},
  note={Preprint, \texttt {arXiv:1104.0320v2}},
}

\bib{Baker-Rumely2010}{book}{
  author={Baker, Matthew},
  author={Rumely, Robert},
  title={Potential theory and dynamics on the Berkovich projective line},
  series={Mathematical Surveys and Monographs},
  volume={159},
  publisher={American Mathematical Society},
  place={Providence, RI},
  date={2010},
  pages={xxxiv+428},
  isbn={978-0-8218-4924-8},
  review={\MR {2599526}},
}

\bib{Berkovich1990}{book}{
  author={Berkovich, Vladimir G.},
  title={Spectral theory and analytic geometry over non-Archimedean fields},
  series={Mathematical Surveys and Monographs},
  volume={33},
  publisher={American Mathematical Society},
  place={Providence, RI},
  date={1990},
  pages={x+169},
  isbn={0-8218-1534-2},
  review={\MR {1070709 (91k:32038)}},
}

\bib{Berkovich1993}{article}{
  author={Berkovich, Vladimir G.},
  title={\'Etale cohomology for non-Archimedean analytic spaces},
  journal={Inst. Hautes \'Etudes Sci. Publ. Math.},
  number={78},
  date={1993},
  pages={5--161 (1994)},
  issn={0073-8301},
  review={\MR {1259429 (95c:14017)}},
}

\bib{Berkovich1999}{article}{
  author={Berkovich, Vladimir G.},
  title={Smooth $p$-adic analytic spaces are locally contractible},
  journal={Invent. Math.},
  volume={137},
  date={1999},
  number={1},
  pages={1--84},
  issn={0020-9910},
  review={\MR {1702143 (2000i:14028)}},
  doi={10.1007/s002220050323},
}

\bib{Berkovich2004}{article}{
  author={Berkovich, Vladimir G.},
  title={Smooth $p$-adic analytic spaces are locally contractible. II},
  conference={ title={Geometric aspects of Dwork theory. Vol. I, II}, },
  book={ publisher={Walter de Gruyter GmbH \& Co. KG, Berlin}, },
  date={2004},
  pages={293--370},
  review={\MR {2023293 (2005h:14057)}},
}

\bib{Borsuk-Jaworowski1952}{article}{
  author={Borsuk, K.},
  author={Jaworowski, J. W.},
  title={On labil and stabil points},
  journal={Fund. Math.},
  volume={39},
  date={1952},
  pages={159--175 (1953)},
  issn={0016-2736},
  review={\MR {0056283 (15,51e)}},
}

\bib{Charatonik1991}{article}{
  author={Charatonik, Janusz J.},
  title={Monotone mappings of universal dendrites},
  journal={Topology Appl.},
  volume={38},
  date={1991},
  number={2},
  pages={163--187},
  issn={0166-8641},
  review={\MR {1094549 (92a:54014)}},
  doi={10.1016/0166-8641(91)90083-X},
}

\bib{Ducros-preprint}{book}{
  author={Ducros, Antoine},
  title={La structure des courbes analytiques},
  date={2012},
  note={In preparation},
}

\bib{Engelking1978}{book}{
  author={Engelking, Ryszard},
  title={Dimension theory},
  note={Translated from the Polish and revised by the author; North-Holland Mathematical Library, 19},
  publisher={North-Holland Publishing Co.},
  place={Amsterdam},
  date={1978},
  pages={x+314 pp. (loose errata)},
  isbn={0-444-85176-3},
  review={\MR {0482697 (58 \#2753b)}},
}

\bib{Favre-Jonsson2004}{book}{
  author={Favre, Charles},
  author={Jonsson, Mattias},
  title={The valuative tree},
  series={Lecture Notes in Mathematics},
  volume={1853},
  publisher={Springer-Verlag},
  place={Berlin},
  date={2004},
  pages={xiv+234},
  isbn={3-540-22984-1},
  review={\MR {2097722 (2006a:13008)}},
  doi={10.1007/b100262},
}

\bib{Freudenthal1937}{article}{
  author={Freudenthal, Hans},
  title={Entwicklungen von R\"aumen und ihren Gruppen},
  language={German},
  journal={Compositio Math.},
  volume={4},
  date={1937},
  pages={145--234},
  issn={0010-437X},
  review={\MR {1556968}},
}

\bib{Hrushovski-Loeser-preprint}{misc}{
  author={Hrushovski, Ehud},
  author={Loeser, Fran\c {c}ois},
  title={Non-archimedean tame topology and stably dominated types},
  date={2012-10-20},
  note={Preprint, \texttt {arXiv:1009.0252v3}},
}

\bib{Illanes1996}{article}{
  author={Illanes, Alejandro},
  title={Characterizing dendrites by deformation retractions},
  journal={Topology Proc.},
  volume={21},
  date={1996},
  pages={129--141},
  issn={0146-4124},
  review={\MR {1489193 (98j:54058)}},
}

\bib{Kontsevich-Soibelman2006}{article}{
  author={Kontsevich, Maxim},
  author={Soibelman, Yan},
  title={Affine structures and non-Archimedean analytic spaces},
  conference={ title={The unity of mathematics}, },
  book={ series={Progr. Math.}, volume={244}, publisher={Birkh\"auser Boston}, place={Boston, MA}, },
  date={2006},
  pages={321--385},
  review={\MR {2181810 (2006j:14054)}},
}

\bib{Kuratowski1930}{article}{
  author={Kuratowski, Casimir},
  title={Sur le probl\`eme des courbes gauches en topologie},
  journal={Fund.\ Math.},
  volume={15},
  date={1930},
  pages={271-283},
}

\bib{Kuratowski1968}{book}{
  author={Kuratowski, K.},
  title={Topology. Vol. II},
  series={New edition, revised and augmented. Translated from the French by A. Kirkor},
  publisher={Academic Press},
  place={New York},
  date={1968},
  pages={xiv+608},
  review={\MR {0259835 (41 \#4467)}},
}

\bib{Lefschetz1931}{article}{
  author={Lefschetz, Solomon},
  title={On compact spaces},
  journal={Ann. of Math. (2)},
  volume={32},
  date={1931},
  number={3},
  pages={521--538},
  issn={0003-486X},
  review={\MR {1503014}},
  doi={10.2307/1968249},
}

\bib{Mayer-Oversteegen1990}{article}{
  author={Mayer, John C.},
  author={Oversteegen, Lex G.},
  title={A topological characterization of ${\bf R}$-trees},
  journal={Trans. Amer. Math. Soc.},
  volume={320},
  date={1990},
  number={1},
  pages={395--415},
  issn={0002-9947},
  review={\MR {961626 (90k:54031)}},
  doi={10.2307/2001765},
}

\bib{Mayer-Nikiel-Oversteegen1992}{article}{
  author={Mayer, John C.},
  author={Nikiel, Jacek},
  author={Oversteegen, Lex G.},
  title={Universal spaces for $\mathbf {R}$-trees},
  journal={Trans. Amer. Math. Soc.},
  volume={334},
  date={1992},
  number={1},
  pages={411--432},
  issn={0002-9947},
  review={\MR {1081940 (93a:54034)}},
  doi={10.2307/2153989},
}

\bib{Nadler1992}{book}{
  author={Nadler, Sam B., Jr.},
  title={Continuum theory},
  series={Monographs and Textbooks in Pure and Applied Mathematics},
  volume={158},
  note={An introduction},
  publisher={Marcel Dekker Inc.},
  place={New York},
  date={1992},
  pages={xiv+328},
  isbn={0-8247-8659-9},
  review={\MR {1192552 (93m:54002)}},
}

\bib{Noebeling1931}{article}{
  author={N{\"o}beling, Georg},
  title={\"Uber eine $n$-dimensionale Universalmenge im $R^{2n+1}$},
  language={German},
  journal={Math. Ann.},
  volume={104},
  date={1931},
  number={1},
  pages={71--80},
  issn={0025-5831},
  review={\MR {1512650}},
  doi={10.1007/BF01457921},
}

\bib{Payne2009}{article}{
  author={Payne, Sam},
  title={Analytification is the limit of all tropicalizations},
  journal={Math. Res. Lett.},
  volume={16},
  date={2009},
  number={3},
  pages={543--556},
  issn={1073-2780},
  review={\MR {2511632 (2010j:14104)}},
}

\bib{Pontrjagin-Tolstowa1931}{article}{
  author={Pontrjagin, L.},
  author={Tolstowa, G.},
  title={Beweis des Mengerschen Einbettungssatzes},
  language={German},
  journal={Math. Ann.},
  volume={105},
  date={1931},
  number={1},
  pages={734--745},
  issn={0025-5831},
  review={\MR {1512738}},
  doi={10.1007/BF01455842},
}

\bib{Thuillier-thesis}{book}{
  author={Thuillier, Amaury},
  title={Th\'eorie du potential sur les courbes en g\'eom\'etrie analytique non archim\'edienne. Applications \`a la th\'eorie d'Arakelov},
  date={2005-10-13},
  note={Ph.D.\ thesis, Universit\'e de Rennes 1},
  pages={viii+184},
}

\bib{Wazewski1923}{article}{
  author={Wa{\.z}ewski, Tad\'e},
  title={Sur les courbes de Jordan ne renfermant aucune courbe simple ferm\'ee de Jordan},
  language={French},
  journal={Polskie Tow. Mat. Annales Rocznik},
  volume={2},
  date={1923},
  pages={49--170},
}

\bib{Whyburn1971}{book}{
  author={Whyburn, Gordon Thomas},
  title={Analytic topology},
  series={American Mathematical Society Colloquium Publications, Vol. XXVIII},
  publisher={American Mathematical Society},
  place={Providence, R.I.},
  date={1971},
  pages={x+281},
}

\end{biblist}
\end{bibdiv}

\end{document}